\newtheorem{Theorem}{Theorem}
\newtheorem{Lemma}[Theorem]{Lemma}
\newtheorem{Corollary}[Theorem]{Corollary}
\newtheorem{Sublemma}[Theorem]{Sublemma}
\newtheorem{Definition}[Theorem]{Definition}
\newtheorem{Remark}[Theorem]{Remark}
\newtheorem{Example}[Theorem]{Example}
\newtheorem{Assertion}[Theorem]{Assertion}
\newcommand{\eps}{\varepsilon}
\newcommand\la{\lambda}
\newcommand\vphi{\varphi}
\newcommand\al{\alpha}
\newcommand\be{\beta}
\newcommand\Si{\Sigma}
\newcommand\ga{\gamma}
\newcommand\Ga{\Gamma}
\newcommand\de{\delta}
\newcommand\BC{ {\mathbb C}}
\newcommand\bfz{\mbox {\bf  z}}
\newcommand\bfb{\mbox {\bf  b}}
\newcommand\nl{\newline}
\newcommand\order{\mbox{\rm{order}\/}}
\newcommand\ord{{\rm{ord}\/}}
\newcommand\Int{\rm{Int}\/}
\newcommand\modulo{\rm{modulo}\/}
\newcommand\inv{^{-1}}
\def\mapright#1{\smash{\mathop{\longrightarrow}\limits^{{#1}}}}
\def\mapdown#1{\Big\downarrow\rlap{$\vcenter{\hbox{$#1$}}$}}
\def\inv{^{-1}}
\begin{document}
\title[On the  Milnor fibration for $f(\mathbf z)\bar g(\mathbf z)$ II
]
{On the Milnor fibration for $f(\mathbf z)\bar g(\mathbf z)$ II
}

\author
[M. Oka ]
{Mutsuo Oka }
\address{\vtop{
\hbox{Department of Mathematics}
\hbox{Tokyo  University of Science}
\hbox{1-3, Kagurazaka, Shinjuku-ku}
\hbox{Tokyo 162-8601}
\hbox{\rm{E-mail}: {\rm oka@rs.tus.ac.jp}}
}}

\keywords {locally tame, non-degenerate, toric multiplicity condition}
\subjclass[2000]{14J70,14J17, 32S25}

\begin{abstract}
We consider a mixed function of type $H(\mathbf z,\bar{\mathbf z})=f(\mathbf z)\bar g({\mathbf z})$ where $f$ and $g$ are 
  holomorphic functions which  are non-degenerate  with respect to the Newton boundaries.  
We assume  also that  the variety $f=g=0$ is  a non-degenerate  complete intersection variety.
 In our previous paper, we considered the case 
that $f,g$  are convenient so that they have isolated singularities.  In this paper we do not assume the convenience  of $f$ and $g$.
 In non-convenient case,  two hypersurfaces  may have non-isolated singularities at the origin.
 We will show that $H$ has  still both  a tubular and  a spherical Milnor fibrations
  under the local tame non-degeneracy and the toric multiplicity condition.  
  We prove also the equivalence of two fibrations.
\end{abstract}
\maketitle

\maketitle
  \noindent
\section{Locally tame non-degenerate complete intersection pair}
\subsection{Introduction} Let $f(\mathbf z)$ and $g(\mathbf z)$ be holomorphic functions vanishing at the origin. For $h(\mathbf z):=f(\mathbf z)g(\mathbf z)$, there exists a tubular Milnor fibration $h:E(r,\de)^*\to D_\de^*$ or  a spherical Milnor fibration
$h/|h|: S_r\setminus K_r\to S^1$ for small $r$ and $\de\ll r$ (\cite{Milnor,Hamm-Le1}.
Here $E(r,\de)^*:=\{\mathbf z\in B_r^{2n}\,|\, 0\ne |f(\mathbf z)\le \de\}$ and $K_r:=f\inv(0)\cap S_r^{2n-1}$.
We consider the mixed function 
$H(\mathbf z,\bar{\mathbf z}):=f(\mathbf z)\bar g(\mathbf z)$ and the existence problem of its Milnor fibration. 
The link of $H$ is the same as the complex link given by $h(\mathbf z)$ but the fibration structure along the link of $g=0$ is conversely oriented.
 It turns out that such a fibration does not exist for an arbitrary pair. This problem has been studied by several authors but there are not yet satisfactory results (\cite{Pichon-Seade0,Pichon-Seade1,Pichon-Seade2, Param-Tibar-revised}).
For a non-degenerate mixed function, it is known that the Milnor fibration exists (\cite{OkaMix}). However for $n\ge 3$, $H$ can not be non-degenerate as $f=g=0$ is a non-isolated singular locus for $H$. In our previous paper \cite{fg-bar}, we have shown the existence of Milnor fibrations for $H$ under the assumption that $f,g$ are convenient non-degenerate functions, satisfying the multiplicity condition. A convenient non-degenerate function $f$  has an isolated singularity at the origin. In this paper, we consider the same problem without assuming the convenience.
That is, we consider the case  that $f=0$  or $g=0$ may have non-isolated singularity at the origin.
\subsection{Vanishing coordinate subspaces and locally tameness}
Let $f(\bfz)$ be a  holomorphic function of
$n$ complex variables $z_1,\dots, z_n$ which vanishes at the origin. Consider 
a coordinate subspace $\mathbb C^I:=\{(z_1,\dots,z_n)\in \mathbb C^n\,|\, z_j=0,\,j\notin I\}$ where $I\subset \{1,2,\dots,n\}$.
$\mathbb C^I$ is called {\em a vanishing coordinate subspace} of $f$ if the restriction of $f$ to 
 $\mathbb C^I$
 is identically zero. The restriction of $f$ is denoted as $f^I$.
We denote the set of vanishing subspaces of $f$ (respectively of $g$) by  $\mathcal V_f$  (resp. by $\mathcal V_g$).
Let $P=(p_1,\dots, p_n)$ be a semi-positive weight vector. We put  $I(P):=\{i\,|\, p_i=0\}$.
Take a vanishing coordinate subspace $\mathbb C^I$ and 
take an arbitrary   semi-positive weight vector $P=(p_1,\dots, p_n)$   such that $I(P)=I$.
Then the face function
 $f_P$ is a weighted homogeneous function of  the variables 
$(z_j)_{j\notin  I}$ with a positive degree $d(P;f)$ with respect to the weight vector $P$.

Recall that  $f$ is non-degenerate if for any strictly positive weight vector $P$ (i.e.,  $I(P)=\emptyset$),
$f_P:\mathbb C^{*n}\to \mathbb C$ has no critical points (\cite{OkaPrincipal}).
We say that the function $f$  (or the hypersurface $V(f):=f\inv(0)$)  is {\em  locally tame and non-degenerate} if 
it is non-degenerate and for any vanishing coordinate subspace $\mathbb C^I$, there exists a positive number $r_I$ such that for any weight vector  $P$ with $I(P)=I$,
$f_P$ is a non-degenerate function of $(z_j)_{j\notin I}$ with the other variables   $(z_i)_{i\in I}\in \mathbb C^{*I}$ being fixed in the ball $\sum_{i\in I}|z_i|^2\le r_I$ (\cite{OkaAf,EO}).  Put $V(f)^\sharp:=\cup_{\mathbb C^I\notin \mathcal V_f}V(f^I)\cap \mathbb C^{*I}$.
Recall that $V(f)^\sharp$ is smooth near the origin  (Lemma (2.2), \cite{Okabook}).

%
For the pair of function $\{f,g\}$, consider the following conditions.
\begin{enumerate}
\item The hypersurfaces $V(f)=f\inv(0), V(g)=g\inv(0)$ are locally  tame and non-degenerate.
\item The variety
$V(f,g)=\{f=g=0\}$ is a locally tame non-degenerate complete intersection variety. Namely (2-a)
for  any strictly positive weight vector $P$, the variety $\{\mathbf z\in \mathbb C^{*n}\,|\, f_P(\mathbf z)=g_P(\mathbf z)=0\}$ is a smooth complete intersection variety. (2-b) For any common vanishing coordinate subspace $\mathbb C^I$, there exists a positive number $r_I$ such that for any weight vector  $P$ with $I(P)=I$,
$\{f_P=g_P=0\}$ is a non-degenerate complete intersection variety in $\mathbb C^{*J}$ with  $J=I^c$ and $\mathbf z_I \in \mathbb C^{*I}$ is fixed  in the ball $\sum_{i\in I}|z_i|^2\le r_I$.

\end{enumerate}
We say $\{f,g\}$ is {\em a  locally tame non-degenerate pair}  if it satisfies only (1) and (2).
The pair $\{f,g\}$  
is  {\em a disjoint  locally tame non-degenerate complete intersection pair} if it satisfies (1), (2-a) and 
(3) $\{f,g\}$ satisfies the disjointness of vanishing subspaces, i.e.  $\mathcal V_f\cap\mathcal V_g=\emptyset$.


Note that if  $\mathbb C^{I}\in \mathcal V_f\setminus\mathcal V_g$, 
there exists a positive number $r_I$  such that  for any semi-positive weight vector $P$ with $I(P)=I$,  $g_P=g^I$ and 
$f_P=g_P=0$ is a non-degenerate complete intersection variety.
\begin{Remark}
If $f$ is locally tame and  non-degenerate and if $\mathbb C^I$ is not a vanishing coordinate subspace for $f$, $f^I$ is also locally tame and  non-degenerate as a function on $\mathbb C^I$. 
See the argument in Proposition (1.5), Chapter III \cite{Okabook}. Locally tameness has been defined for mixed functions (Definition 2.7, \cite{EO}). If a holomorphic function $f(\mathbf z)$ is locally tame, it is also locally tame as a mixed function.

\end{Remark}

In this paper, we consider the existence problem of the Milnor fibration of $H(\mathbf z,\bar{\mathbf z})=f(\mathbf z)\bar g(\mathbf z)$ under the assumption that $f,g$  is a locally tame non-degenerate complete intersection pair.
For  the further detail about a non-degenerate complete intersection variety, see  \cite{OkaPrincipal,Okabook}.
\subsection{Examples }
Let $f(\mathbf z)=z_1^{a}+\dots+z_{n-1}^{a}+z_{n-1}z_n^{a-1}$ with $a>1$. Then $\mathbb C^I,\,I=\{n\}$ is a vanishing coordinate subspace for $f$.  
Let $g_1(\mathbf z)=c_1z_1^{a-1}z_2+c_2z_2^a+\dots+c_nz_{n}^{a}$ and 
$g_2=c_1z_1^{2a}+\cdots+c_{n-1}z_{n-1}^{2a}+c_n z_{n-1}^2 z_n^{2a}$.
For generic coefficients $c_1,\dots,c_n$, $f, g_1, g_2$ are  locally tame non-degenerate functions and $\mathcal V_f=\mathcal V_{g_2}=\{\mathbb C^I\}$ and $\mathcal V_{g_1}=\{\mathbb C^J\}$ where $I=\{n\}$ and  $J=\{1\}$.
$\{f,g_1\}$  is a disjoint locally tame non-degenerate pair
while  $\{f,g_2\}$ is  a  locally tame non-degenerate pair. They have the common vanishing coordinate subspace $\mathbb C^{\{n\}}$.
\section{Isolatedness of the critical value }

\subsubsection{Multiplicity condition}\label{multiplicity}
 We  slightly generalize the multiplicity condition which is introduced in \cite{fg-bar}. We say that
$H:=f\bar g$ satisfies {\em the  multiplicity condition}  if 
 there exists a good resolution $\pi: X\to \mathbb C^n$ of the holomorphic function $h:=fg$ such that
\begin{enumerate}
\item[(i)]
$\pi:X\setminus \pi\inv(V(h))\to \mathbb C^n\setminus V(h)$ is biholomorphic
and the divisor defined by $\pi^*(fg)=0$ has only normal crossing  singularities and the  respective strict transforms $\tilde V(f)$ of $V(f)$ and $\tilde V(g)$ of $V(g)$ are smooth.

\item[(ii) ]Put $\pi\inv(\mathbf 0)=\cup_{j=1}^s D_j$ where $D_1,\dots, D_s$ are  smooth compact divisors in $X$.
Denote  the respective multiplicities of $\pi^*f$  and $\pi^*g$ along $D_j$  by $m_j$ and $n_j$.
Then $m_j\ne n_j$ for $j=1,\dots,s$.
\end{enumerate}
Assume that there exists a regular simplicial cone subdivision $\Si^*$ of the dual Newton diagram
$\Ga^*(fg)$ and let $\hat\pi:X\to \mathbb C^n$ be the corresponding admissible toric modification. Let $\mathcal V^+$ be the set of strictly positive vertices  of $\Si^*$. Then it gives a good resolution of the function $fg$ and the compact exceptional divisors are bijectively correspond to 
$\{\hat E(P)\,|\ P\in \mathcal V^+\}$ (\cite{Okabook}). Recall that the multiplicity of $\hat\pi^*f$ and $\hat\pi^* g$ along the divisor
$\hat E(P)$ are given by $d(P,f)$ and $d(P,g)$ respectively.
We say that $\hat \pi$ satisfies {\em the toric multiplicity condition for $H$} if 
\[
d(P,f)\ne d(P,g),\quad \forall P\in \mathcal V^+.
\]
For further detail about the toric modification $\hat\pi:X\to \mathbb C^n$, we refer to \cite{Okabook}.
\begin{Lemma}[ Isolatedness of the critical value, Lemma 3 \cite{fg-bar}]\label{smoothness}
Assume that $\{f,g\}$ is a locally tame and  non-degenerate complete intersection pair. Assume that there exists an admissible toric modification
$\hat \pi:X\to \mathbb C^n$ which satisfies the toric multiplicity condition.
 Then 
 there exist positive numbers $r_1$ such that $0$ is the unique  critical value of $H$ on
 $B_{r_1}^{2n}$.
\end{Lemma}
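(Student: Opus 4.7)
I will argue by contradiction, following the strategy of Lemma~3 of \cite{fg-bar} and adapting each step to the non-convenient setting by invoking the locally tame hypothesis. Suppose the conclusion fails: there is a sequence $\mathbf{z}^{(k)} \to \mathbf{0}$ of mixed critical points of $H$ with $H(\mathbf{z}^{(k)}) \ne 0$, so $f(\mathbf{z}^{(k)})\,g(\mathbf{z}^{(k)}) \ne 0$ for all $k$. A direct calculation gives $\partial H = \bar g\,\partial f$ and $\bar\partial H = f\,\overline{\partial g}$, and the standard mixed critical-point condition produces $\lambda_k \in S^1$ with $\bar f\,\partial g(\mathbf{z}^{(k)}) = \lambda_k\,\bar g\,\partial f(\mathbf{z}^{(k)})$. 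Dividing through by $fg$ and absorbing the resulting unimodular factors into a new phase $\mu_k \in S^1$, this rewrites as the logarithmic-derivative identity
$$\frac{\partial g}{g}(\mathbf{z}^{(k)}) \;=\; \mu_k\,\frac{\partial f}{f}(\mathbf{z}^{(k)}).$$

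Since $\mathbf{z}^{(k)} \notin V(fg)$, each $\mathbf{z}^{(k)}$ lifts uniquely to $\tilde{\mathbf{z}}^{(k)} := \hat\pi^{-1}(\mathbf{z}^{(k)}) \in X$. Properness of $\hat\pi$ makes $\hat\pi^{-1}(\mathbf{0})$ compact, so after extracting a subsequence we may assume $\tilde{\mathbf{z}}^{(k)} \to \tilde{\mathbf{z}}_* \in \hat\pi^{-1}(\mathbf{0})$ and $\mu_k \to \mu_* \in S^1$. Pick a toric chart $(u_1,\ldots,u_n)$ around $\tilde{\mathbf{z}}_*$ coming from a simplicial cone with integer basis $\{P_1,\ldots,P_n\}$; on this chart
$$\hat\pi^* f \;=\; u_1^{a_1}\cdots u_n^{a_n}\,\tilde f(\mathbf{u}),\qquad \hat\pi^* g \;=\; u_1^{b_1}\cdots u_n^{b_n}\,\tilde g(\mathbf{u}),$$
with $a_i = d(P_i,f)$, $b_i = d(P_i,g)$, and $\tilde f,\tilde g$ holomorphic. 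Write $A := \{i : u_i(\tilde{\mathbf{z}}_*) = 0\}$.

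Pulling the logarithmic identity back to the chart and multiplying by $u_i$ gives, at $\tilde{\mathbf{z}}^{(k)}$,
$$b_i + u_i\,\partial_{u_i}\log\tilde g \;=\; \mu_k\bigl(a_i + u_i\,\partial_{u_i}\log\tilde f\bigr), \qquad i = 1,\ldots,n.$$
For $i \in A$ such that $\tilde f$ and $\tilde g$ are units near $\tilde{\mathbf{z}}_*$, the remainder terms go to $0$ as $k \to \infty$, yielding $b_i = \mu_*\,a_i$. If any such $i$ has $P_i \in \mathcal V^+$, then $a_i, b_i$ are positive integers, so $\mu_* = b_i/a_i \in S^1 \cap \mathbb{R}_{>0} = \{1\}$, forcing $a_i = b_i$ and contradicting the toric multiplicity condition $d(P_i,f) \ne d(P_i,g)$. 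The remaining possibility is that $\tilde f$ or $\tilde g$ vanishes at $\tilde{\mathbf{z}}_*$, which places $\tilde{\mathbf{z}}_*$ on $\tilde V(f) \cap \tilde V(g)$; here the locally tame non-degeneracy of the complete intersection $\{f=g=0\}$ supplies the $\mathbb{C}$-independence of $\partial\tilde f$ and $\partial\tilde g$ at $\tilde{\mathbf{z}}_*$, contradicting the proportionality inherited from the critical-point equation.

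The main difficulty, and the reason the locally tame hypothesis is needed, lies in the limit passage above whenever $\tilde{\mathbf{z}}_*$ sits on non-compact exceptional divisors. In the convenient case of \cite{fg-bar}, every divisor meeting $\hat\pi^{-1}(\mathbf{0})$ is compact, every $P_i$ with $i \in A$ is strictly positive, $\tilde f$ and $\tilde g$ are automatically units, and the first branch of the case split always applies. In the present non-convenient setting, one must invoke the locally tame condition twice: first to shrink the ambient ball so that the traces of $(\tilde{\mathbf{z}}^{(k)})$ on non-compact divisors fall inside the $r_I$-neighborhoods given by the tame constants, on which $f_P$ and $g_P$ enjoy the same uniform non-degenerate behavior as in the convenient case; and second, via clause~(2) of the locally tame complete intersection hypothesis, to secure the transversality of $\tilde V(f)$ and $\tilde V(g)$ that disposes of the second branch.
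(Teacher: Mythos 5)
Your strategy — lifting the critical sequence to the toric modification and extracting a contradiction with $d(P_i,f)\ne d(P_i,g)$ from the pulled-back logarithmic identity — is indeed the right framework (the paper refers the reader to the resolution-based argument of Lemma 3 in \cite{fg-bar}, and also provides a separate resolution-free proof via the Curve Selection Lemma and face functions under the condition~$(\sharp)$). However, your case analysis has a genuine gap.

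You assert that the only obstruction to the limit passage $b_i=\mu_*a_i$ is that ``$\tilde f$ or $\tilde g$ vanishes at $\tilde{\mathbf z}_*$, which places $\tilde{\mathbf z}_*$ on $\tilde V(f)\cap\tilde V(g)$.'' That last inference is false: if (say) $\tilde f(\tilde{\mathbf z}_*)=0$ but $\tilde g(\tilde{\mathbf z}_*)\ne0$, the limit point lies on $\tilde V(f)$ only, and the complete-intersection non-degeneracy of $V(f,g)$ is of no use — one needs the non-degeneracy of $f$ \emph{alone} to dispose of that branch (and symmetrically for $g$). In the paper's resolution-free proof this is exactly the distinction between Case~1(a), Case~1(b) (only one of $f_P^I(\mathbf b),g_P^I(\mathbf b)$ vanishes, contradicted by non-degeneracy of the single hypersurface) and Case~2 (both vanish, contradicted by non-degeneracy of the intersection variety). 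Your three-way case split has collapsed two cases that need different ingredients, and the hypothesis you invoke in your ``remaining possibility'' cannot cover the mixed case.

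A second, more fundamental issue: the last paragraph, where the whole novelty of the non-convenient case lives, never actually runs the argument. You observe that $\tilde{\mathbf z}_*$ may lie on non-compact exceptional divisors and that the locally tame hypothesis ``supplies'' the uniform non-degeneracy needed — but you do not extract from the tame constants $r_I$ an explicit estimate showing that the remainder terms $u_i\,\partial_{u_i}\log\tilde f$, $u_i\,\partial_{u_i}\log\tilde g$ stay bounded (or that the relevant face functions have no critical points) on the portion of the chart that the sequence actually visits. Without this the limit passage is unjustified precisely in the case that distinguishes this lemma from the convenient version of \cite{fg-bar}. The paper's alternative proof sidesteps the toric chart altogether: it applies the Curve Selection Lemma, reads off a weight vector $P$ from the orders $p_i=\ord_t z_i(t)$, and compares the Taylor leading terms of $f$, $g$, $\partial f$, $\partial g$ along the curve, so that local tameness enters cleanly as the non-degeneracy of the face functions $f_P^I$, $g_P^I$ with the vanishing coordinates frozen in the ball of radius $r_I$ — this is the concrete mechanism your sketch would need to replicate in the toric picture.
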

The proof follows by the exact same argument as Lemma 3,\cite{fg-bar}.
\subsubsection{A sufficient condition for the toric multiplicity condition}
We consider the following truncated cone. Let $h(\mathbf z)=\sum_{\nu}a_\nu \mathbf z^\nu$ be a holomorphic function which is not necessarily convenient.
Let $\Ga_+(h)$ be the convex hull of the union $\bigcup_{\nu,a_\nu\ne 0} \{\nu+(\mathbb R^+)^n\}$ as usual. The Newton boundary 
$\Ga(h)$ is defined by the union of compact faces of $\Ga_+(h)$. 
To give a sufficient condition for the multiplicity condition, 
we further consider following.
\begin{Definition} We define  the set $\Ga_{++}(h)$ and $\Int\Ga_{++}(h)$ as
\[
\Ga_{++}(h)=\{r\nu\,|\, r\ge 1,\,\nu\in \Ga(h)\},\, \Int\Ga_{++}(h)=\{r\nu\,|\, r>1,\,\nu\in \Ga(h)\}
\]
\end{Definition}
Note that $\Ga_{++}(h)\subset \Ga_+(h)$ and the equality holds if and only if $h$ is convenient.
The following gives a sufficient condition for the multiplicity condition.
\begin{Lemma} Assume $\{f,g\}$ is a locally tame non-degenerate complete intersection pair.
Suppose the following condition is satisfied.
\nl
$(\sharp)$: $\Ga(f)\subset \Int\,\Ga_{++}(g)$
or 
   $\Ga(g)\subset \Int\,\Ga_{++}(f)$. 
\nl
Then the multiplicity condition is satisfied with respect to any admissible toric modification. 
\end{Lemma}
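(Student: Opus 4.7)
The plan is to verify the pointwise inequality $d(P,f)\neq d(P,g)$ for every strictly positive integral weight vector $P$, directly from the definition of $\Int\,\Ga_{++}$. This is stronger than what is actually needed, since $\mathcal V^+$ for any admissible toric modification consists of such weight vectors, so the statement will be independent of the particular choice of $\hat\pi$.

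Without loss of generality assume $\Ga(f)\subset \Int\,\Ga_{++}(g)$; the other case is symmetric. Fix a strictly positive weight vector $P=(p_1,\dots,p_n)$. Because $P$ has only positive components, the linear functional $\nu\mapsto\langle P,\nu\rangle$ attains its minimum over $\Ga_+(f)$ along a compact face, so
\[
d(P,f)=\min_{\nu\in \Ga(f)}\langle P,\nu\rangle,
\]
and the minimum is realised at some vertex $\nu^*$ of $\Ga(f)$. By the hypothesis $(\sharp)$, one can write $\nu^*=r^*\mu^*$ with $r^*>1$ and $\mu^*\in\Ga(g)$. Since $\langle P,\mu^*\rangle\geq d(P,g)$, we have
\[
d(P,f)=\langle P,\nu^*\rangle=r^*\langle P,\mu^*\rangle\geq r^*\,d(P,g).
\]
Moreover $d(P,g)>0$, since every $\mu\in\Ga(g)$ has nonnegative integer coordinates not all zero while $P$ is strictly positive. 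Combined with $r^*>1$ this gives $d(P,f)>d(P,g)$, and in particular $d(P,f)\neq d(P,g)$, which is the toric multiplicity condition at $P$.

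I do not anticipate any real obstacle: the proof is a single linear estimate derived from the defining property of $\Int\,\Ga_{++}$ together with the positivity of $P$ and of $d(P,g)$. The only point requiring care is the preliminary observation that the minimum in the definition of $d(P,f)$ is attained on the compact Newton boundary $\Ga(f)$ rather than in the unbounded part $\Ga_+(f)\setminus\Ga(f)$, which is automatic for strictly positive $P$. Notably, the locally tame non-degenerate hypothesis on $\{f,g\}$ is not needed in the argument; condition $(\sharp)$ alone forces the toric multiplicity condition for any admissible toric modification.
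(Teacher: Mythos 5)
Your argument is correct, and since the paper states this lemma without any proof (pointing only to Figure 1 as an illustration), there is no textual argument to compare against; the linear estimate you give is exactly the natural one. Concretely: for strictly positive $P$ the minimum of $\langle P,\cdot\rangle$ over $\Ga_+(f)$ is attained at a vertex $\nu^*\in\Ga(f)$, condition $(\sharp)$ lets you write $\nu^*=r^*\mu^*$ with $r^*>1$, $\mu^*\in\Ga(g)$, and then $d(P,f)=r^*\langle P,\mu^*\rangle\ge r^*\,d(P,g)>d(P,g)$ because $d(P,g)>0$. One small wording nit: points of $\Ga(g)$ need not have integer coordinates (only the vertices do), but nonnegativity together with $\mu\ne 0$ already forces $\langle P,\mu\rangle>0$ when $P$ is strictly positive, so the conclusion $d(P,g)>0$ stands; and your closing observation that the locally tame non-degenerate hypothesis plays no role in this particular inequality is accurate.
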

See Figure \ref{Gamma++} which shows the situation $\Int\,\Ga_{++}(f)\supset \Ga(g)$.
The condition
$(\sharp)$ 
is a generalization of Newton multiplicity condition in \cite{fg-bar} for non-convenient $f$ and $g$.
We call $(\sharp)$ {\em the tame Newton multiplicity condition}.
\begin{figure}[htb]
\setlength{\unitlength}{1bp}
\begin{picture}(600,300)(-100,0)
{\includegraphics[width=8cm, bb=0 0 595 842]{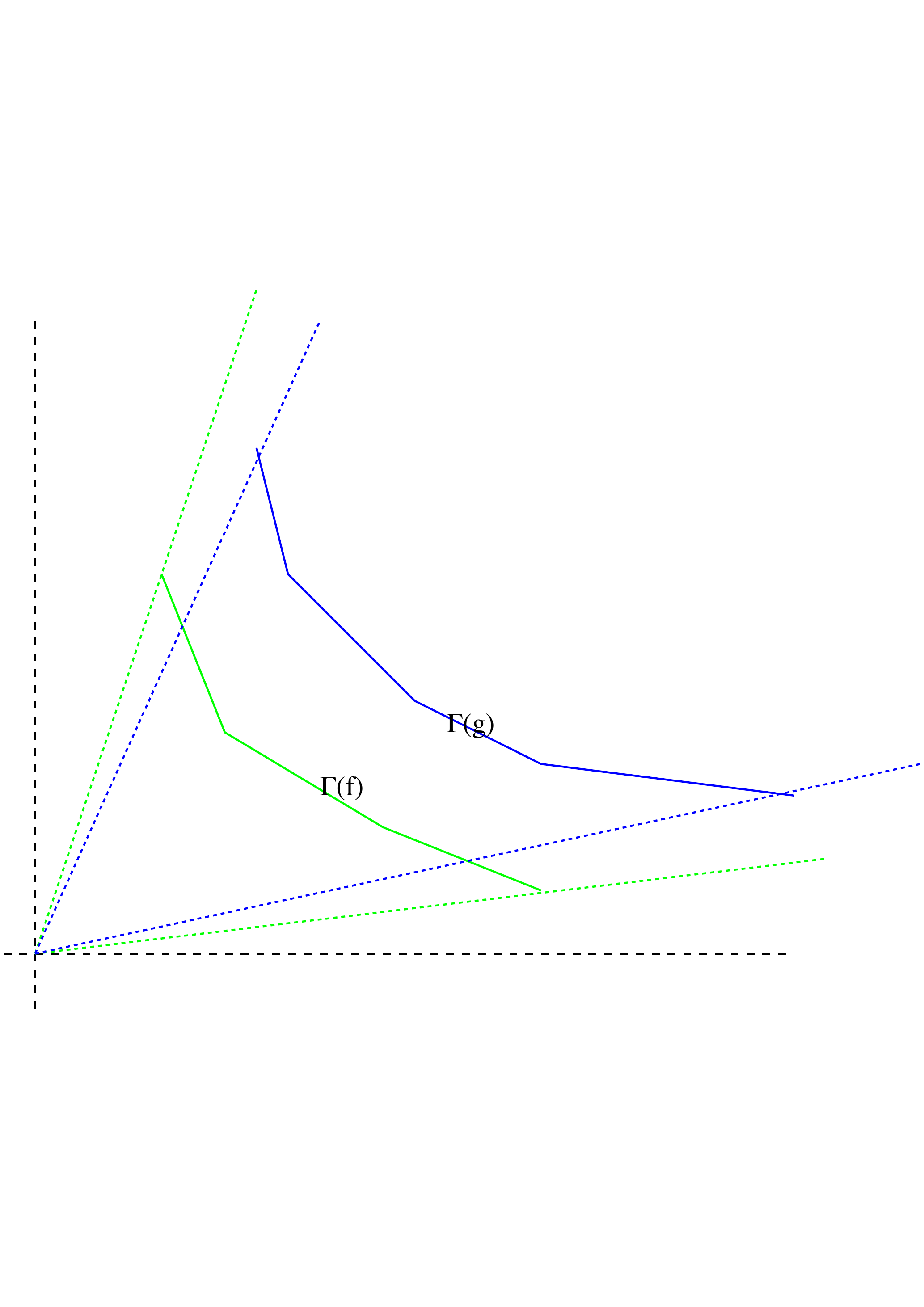}}
\put(-140,-450){\tiny$\Ga_{++}(g)$}
\put(-255,-470){\tiny$\Ga_{++}(f)$}

\end{picture}
\vspace{-2cm}
\caption{$\Ga(g)\subset \Int\,\Ga_{++}(f)$}\label{Gamma++}
\end{figure}
\begin{Example} \label{Ex5}
1. Assume that $f(\mathbf z)$ (respectively $g$) is a convenient function and assume that $\Ga(f)\cap \Ga(g)=\emptyset$ and  $\Ga(g)$ is above  $\Ga(f)$(resp. $\Ga(f)$ is above $\Ga(g)$). Then the tame Newton multiplicity condition is satisfied.

2.  Assume $\{f,g\}$ is a locally tame non-degenerate complete intersection pair and let $\hat \pi:X\to \mathbb C^n$ is an admissible toric modification. Let $\mathcal V^+$ be the strictly positive vertices of $\Si^*$. Consider the  mapping $\vphi_m:\mathbb C^n\to \mathbb C^n$
defined by $\vphi(\mathbf z)=(z_1^m,\dots, z_n^m)$ and put $f_m(\mathbf z):=\vphi^*f(\mathbf z)$
and $g_m(\mathbf z):=\vphi^*g(\mathbf z)$.  Then there exists a sufficiently large $m$ such that $\hat \pi:X\to \mathbb C^n$ satisfies the toric multiplicity condition for $f_m\bar g$ and $f\bar g_m$ respectively.
This follows from the canonical equality $d(P,f_m)=m\,d(P,f)$ and $d(P,g_m)=m\,d(P,g)$ and 
the stability of the dual Newton diagrams $\Ga^*(f)=\Ga^*(f_m),\,\Ga^*(g)=\Ga^*(g_m)$.

3. Let $f(\mathbf z)=c_1z_1^a+\dots+c_nz_n^a$.  Let $g(\mathbf z)$ be any locally tame non-degenerate function.  
Then $\{f,g\}$ is a locally tame non-degenerate pair for generic coefficients $c_1,\dots, c_n$ and satisfies the Newton multiplicity condition if
$a=1$. If $a>1$,  $\{f,g \prod_{i=1}^n z_i^a\}$ satisfies the Newton multiplicity condition, as $\Ga_{++}(f) \supset \Int\Ga_{++}(g)$.

\end{Example}

\begin{proof}[Alternative proof of Lemma \ref{smoothness}] Though Lemma \ref{smoothness} follows from Lemma 3, \cite{fg-bar} under the assumption of the tame Newton multiplicity condition ($\sharp$), it will be useful to give another proof which does not  use a resolution. We prove the assertion by   contradiction.  Assume that 
 the assertion does not hold. Then using  the Curve Selection
 Lemma (\cite{Milnor,Hamm1}),  we can find an analytic  path
$\bfz(t),\,0\le t\le 1$,
$\bfz(0)=\mathbf 0$ such that 
$H(\bfz(t),\bar\bfz(t))\ne 0$ for $t\ne 0$  and $\bfz(t)$ is a critical point
 of the function $H:\BC^n\to \BC$ for any $t$.
Using Proposition 1, \cite{OkaPolar} (see also Proposition 1, \cite{fg-bar}),
we can find an analytic  function $\la(t)$ whose values are  in $S^1\subset \BC$
such that 
\begin{eqnarray} 
 \overline{\partial H}(\bfz(t),\bar\bfz(t))=\la(t)\, \bar \partial H(\bfz(t),\bar\bfz(t)).
\end{eqnarray}
Note that in our case we have
 \[\overline{\partial H}=\overline{\partial f}  g,\quad \bar{\partial }H=f\overline{\partial g}.
\]
Thus (\ref{Sing-cond1}) implies
\begin{eqnarray}\label{Sing-cond1}
g(\mathbf z(t))\overline{\partial f}(\mathbf z(t))=\la(t) f(\mathbf z(t))\overline{\partial g}(\mathbf z(t)).
\end{eqnarray}
Put $I=\{j\,|\,z_j(t)\not \equiv 0\}$. 
We may assume for simplicity that 
 $I=\{1,\dots,m\}$ and we consider the restriction  $H^I=H|\mathbb C^I$.  
By the assumption $\mathbf z(t)\notin V(H)$ for $t\ne 0$,  we
 see that $H^I\ne 0$. 
Consider the Taylor expansions of $\bfz(t)$ and  $\la(t)$:
\begin{eqnarray*}
 \bfz_i(t)&=&b_i \,t^{p_i}+\text{(higher terms)},\,b_i\ne 0,p_i>0,\quad
i=1,\dots, m\\
\la(t)&=&\la_0+\text{(higher terms)},\quad \la_0\in S^1\subset \BC.
\end{eqnarray*}
Consider the weight vector  $P=(p_1,\dots, p_m)$ and the point $\bfb=(b_1,\dots, b_m)\in \mathbb C^{*I}$
  and  also the face function $f^I_P$
of $f^I(\bfz,\bar\bfz)$.
Recall that $f_P^I$ and $g_P^I$  are defined by the partial sum of monomials in $f^I(\mathbf z,\bar{\mathbf z})$ where the monomials 
have the minimal degree $d(P;f^I)$ and $d(P;g^I)$ respectively.
Then 
we have 
\begin{eqnarray*}
f(\mathbf z(t))&=&f_P^I(\mathbf b)t^{d(P;f^I)}+\text{(higher terms)},\\
g(\mathbf z(t))&=&g_P^I(\mathbf b)t^{d(P;g^I)}+\text{(higher terms)},\\
\frac{\partial f}{\partial z_j}(\bfz(t))&=&\frac{\partial f^I_P}{\partial
 z_j}(\bfb)\,t^{d(P;f)-p_j}+\text{(higher terms)},\,j\in I\\
 \frac{\partial g}{\partial  z_j}(\bfz(t))&=&\frac{\partial g^I_P}{\partial
 z_j}(\bfb)\,t^{d(P;g)-p_j}+\text{(higher terms)},\,j\in I.
 \end{eqnarray*}
The equality (\ref{Sing-cond1}) says that 
\begin{eqnarray}\label{eq2}
 {\overline{\frac{\partial f^I}{\partial z_j}}
 \,(\bfz(t),\bar\bfz(t))}=\la(t)\,\frac{f(\mathbf z(t))}{g(\mathbf z(t))}
\overline{ \frac{\partial g^I}{\partial  z_j}}(\bfz(t),\bar\bfz(t) ),\,\,j=1,\dots, m.
\end{eqnarray}
Put $\ell:=\ord\,f(\mathbf z(t))-\ord\,g(\mathbf z(t))=\ord_t\frac{f(\mathbf z(t))}{g(\mathbf z(t))}$.
Here $\ord\, \vphi(t)$ of a Laurent series $\vphi(t)$ is by definition the lowest degree of the series $\vphi(t)$. Thus 
$\lim_{t\to 0}\vphi(t)/t^{\ord\,\vphi(t)}$ is a non-zero number.
Note that 
\[\begin{split}&\ord\,f(\mathbf z(t))\ge d(P;f),\quad \ord\, g(\mathbf z(t))\ge d(P;g),\\
&\ord\,\frac{\partial f}{\partial z_j}(\bfz(t))\ge d(P;f)-p_j,\,\ord\, \frac{\partial g}{\partial z_j}(\bfz(t))\ge d(P;g)-p_j.
\end{split}
\]
\indent
Case 1.  (a) $f_P^I(\mathbf b)=0,\,g_P^I(\mathbf b)\ne 0$ or (b) $f_P^I(\mathbf b)\ne 0,\,g_P^I(\mathbf b)=0$.
In the case of (a), $\ell>d(P;f^I)-d(P;g^I)$ and (\ref{eq2}) says that $\partial f_P^I(\mathbf b)=0$. 
This implies $\mathbf b$ is a critical point of $f_P^I$ and a contradiction to the non-degeneracy assumption.
In case of (b), $\ell<d(P;f^I)-d(P;g^I)$ we get   $\partial g_P^I(\mathbf b)=0$ and we get also a contradiction to the non-degeneracy assumption of $g$.

Case 2. $f_P(\mathbf b)=0,\,g_P(\mathbf b)= 0$. Then $\mathbf b\in V(f_P,g_P)$.
If $\ell\ne d(P;f)-d(P;g)$, we get a  contradiction $\partial f_P^I(\mathbf b)=0$
or $\partial g_P^I(\mathbf b)=0$. So we  assume that $\ell= d(P;f)-d(P;g)$.
Then we see that $\partial f_P^I(\mathbf b), \partial g_P^I(\mathbf b)$ are linearly dependent over $\mathbb C$
but this is a contradiction 
to the non-degeneracy assumption of the intersection variety $V(f,g)$.

Case 3. Assume that $f_P(\mathbf b)\ne 0,\,g_P(\mathbf b)\ne 0$.  Then we have $\ell=d(P;f)-d(P;g)$ and 
\[g_P(\mathbf b)\overline{ \frac{\partial f_P}{\partial  z_j}}(\mathbf b)=\la_0 
f_P(\mathbf b)\overline{ \frac{\partial g_P}{\partial  z_j}}(\mathbf b),\,\,j=1,\dots, m.
\]
Multiplying $p_j \bar b_j$ to the both side and summing up for $j=1,\dots, m$,  we use the Euler equalities of $f_P$ and $g_P$,
\[
d(P,f^I)f_P^I (\mathbf b)=\sum_{j=1}^m p_j b_j\frac{\partial f_P^I}{\partial z_j}(\mathbf b),\quad
d(P,g^I)g_P^I (\mathbf b)=\sum_{j=1}^m p_j b_j\frac{\partial g_P^I}{\partial z_j}(\mathbf b),
\]
to get the equality
\[\begin{split}
&d(P;f)\overline{f_P}(\mathbf b) g_P(\mathbf b)=\la_0 d(P;g)f_P(\mathbf b) \overline{g_P}(\mathbf b)\,\,\text{or}
\end{split}\]
This gives an absurd equality:
\[
1\ne\frac{d(P;f)}{d(P;g)}=
\left|\la_0 \frac{f_P(\mathbf b)\overline{ g_P}(\mathbf b)}
{\overline{f_P}(\mathbf b)g_P(\mathbf b)}\right|=
1.
\]
The first inequality follows from the tame Newton multiplicity condition. The last equality is
due to $|\la_0|=1$.
\end{proof}

\begin{Remark}
If $f$ is locally tame and  non-degenerate and if $\mathbb C^I$ is not a vanishing coordinate subspace for $f$, $f^I$ is also locally tame and  non-degenerate as a function on $\mathbb C^I$. 
See the argument in Proposition (1.5), Chapter III \cite{Okabook}. Locally tameness has been defined for mixed functions (Definition 2.7, \cite{EO}). If a holomorphic function $f(\mathbf z)$ is locally tame, it is also locally tame as a mixed function.

\end{Remark}

\section{ Fibration problem for function $f \bar g$}
We study the existence problem for the Milnor fibration of the mixed function $H(\mathbf z,\bar{\mathbf z}):=f(\mathbf z)\bar g(\mathbf z)$
in a more general situation. In this paper,  we do not assume the convenience of $f$ and $g$  and therefore $V(f)$ or $V(g)$ may have non-isolated singularities at the origin. There are also interesting  works from more general viewpoint in 
Parameswaran and Tibar \cite{Param-Tibar,Param-Tibar-revised} and Araujo dos Santos, Ribeiro and Tibar \cite{ART1} where  authors consider the case of critical values being not isolated.
\subsection{Canonical stratification }
We assume that $\{f,g\}$ is a locally tame non-degenerate complete intersection pair.
Consider the hypersurface $V(fg)=V(f)\cup V(g)$. 
Note that the mixed hypersurface $V(f\bar g)$ is equal to $V(fg)$ as  real algebraic varieties. We consider the following canonical stratification $\mathcal S$ of  $\mathbb C^{*n}$ which also give a stratification of $V(fg)$.
Put $V^{*I}(f)=V(f^I)\cap\mathbb C^{*I}$ if $\mathbb C^I$ is not a vanishing coordinate subspace.
\newline
Here $\mathbb C^{*I}=\{(z_i)\in \mathbb C^I\,|\, \forall z_i\ne 0,\,i\in I\}$.
We first  define a stratification $\mathcal S^I$ of $\mathbb C^{*I}$ as  follows.
\[\begin{split}
&\left\{\mathbb C^{*I}\setminus  ( V^{*I}(f)\cup V^{*I}(g)),   V^{*I}(f)', V^{*I}(g)', \ V^{*I}(f)\cap   V^{*I}(g)\right\}, \text{if}\,
 f^I\ne 0,\, g^I\ne 0\\
&\left\{\mathbb C^{*I}\setminus V^{*I}(f),  V^{*I}(f)\right\}, \text{if}\,g^I\equiv 0,f^I\ne 0\\
&\left\{\mathbb C^{*I}\setminus V^{*I}(g),  V^{*I}(g)\right\}, \text{if}\, f^I\equiv 0,g^I\ne 0\\
&\left\{\mathbb C^{*I}\right\},\quad  \text{if}\,f^I\equiv 0,g^I\equiv 0.
\end{split}
\]
and  we define $\mathcal S=\cup_{I}\mathcal S^I$. If $\{f,g\}$ is a disjoint locally tame non-degenerate pair, the last case  does not exist.
Here $V^{*I}(f)'= V^{*I}(f)\setminus V^{*I}(g)$ and $V^{*I}(g)'=V^{*I}(g)\setminus V^{*I}(f)$.
$V^I(f)$ is empty only if $f^I$ is a monomial.

We call $\mathcal S$  {\em the canonical toric strafitication }of $V(fg)=V(f\bar g)$. Note that $\mathcal S$ is a complex analytic stratification.
\subsection{Transversality and Thom's $a_f$-regularity}
We use the notation $V(H, \mathbf z):=H\inv(H(\mathbf z))$ hereafter.
Another key condition for the existence of the Milnor fibration is the transversality of the nearby fibers $H\inv(\eta),\,\eta\ne 0$ and the sphere
$S_r^{2n-1}$.  Assume that $0$ is the unique critical value of $H$  in $B_{r_1}^{2n}$. 
\vspace{.2cm}\newline\noindent
{\bf Transversality of nearby fibers}:
For any pair $r_2\le r_1$, there exists a positive number $\de$ such that 
for any $r,\, r_2\le r\le r_1$ and non-zero $\eta$ with  $|\eta|\le \de$, $H\inv(\eta)$ and  $S_r^{2n-1}$ intersect transversely.
This condition follows if  $H$ satisfies the Thom's $a_f$-regularity (See for example, Proposition 11, \cite{OkaAf}).
Recall that $H$ satisfies $a_f$-condition at the origin if there exists a stratification  $\mathcal S$ of $H\inv(0)\cap B_{r_1}^{2n}$ for some $r_1>0$ such that for any sequence $\mathbf q_\nu,\,\nu=1,2,\dots$
which converges $\mathbf q_0\in M,\,M\in \mathcal S$ and $\mathbf q_0\ne \mathbf 0$, the limit of the tangent space $T_{\mathbf q_\nu}V(H,\mathbf q_\nu)$ (if it exists) includes the tangent space of $M$ at $\mathbf q_0$.

\begin{Theorem}\label{af-regularity}
Assume that  either (i) $\{f,g\}$ is a  locally tame non-degenerate complete intersection pair which satisfies also  
 the tame Newton multiplicity condition $(\sharp)$  
 or (ii) $\{f,g\}$ is a disjoint locally tame non-degenerate complete intersection pair.
 In the case (ii), we assume also that $H$ has a unique critical value  $0$  in  a ball $B_{r_1}^{2n}$. 
 Then $H=f\bar g$ satisfies $a_f$-regularity.
\end{Theorem}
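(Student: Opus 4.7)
I adapt the Curve Selection argument of the alternative proof of Lemma~\ref{smoothness} to incorporate a tangent vector along the arc. Assume, for contradiction, that $H$ fails $a_f$-regularity with respect to $\mathcal S$ at some stratum $M$ and some $\mathbf q_0 \in M$ with $\mathbf q_0 \ne \mathbf 0$. The real normal $2$-plane to $\ker dH_{\mathbf z}$ in $T_{\mathbf z}\mathbb C^n$ is the $\mathbb R$-span of $g(\mathbf z)\,\overline{\partial f}(\mathbf z)$ and $f(\mathbf z)\,\overline{\partial g}(\mathbf z)$, so the failure of $a_f$ is a semi-analytic condition. The real-analytic Curve Selection Lemma (together with Proposition~1 of~\cite{OkaPolar}, as in the proof of Lemma~\ref{smoothness}) yields an analytic arc $\mathbf z(t)$ with $\mathbf z(0) = \mathbf q_0$, $H(\mathbf z(t)) \ne 0$ for $t > 0$, analytic functions $\mu(t) \in \mathbb C^*$ and $\lambda(t) \in S^1$, and an analytic vector field $\mathbf v(t)$ with $\mathbf v(0) = \mathbf v_0 \in T_{\mathbf q_0}M \setminus \{\mathbf 0\}$ satisfying
\begin{equation}\label{afRel}
\mathbf v(t) \;=\; \mu(t)\bigl(g(\mathbf z(t))\,\overline{\partial f}(\mathbf z(t)) + \lambda(t)\,f(\mathbf z(t))\,\overline{\partial g}(\mathbf z(t))\bigr).
\end{equation}

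Set $J = \{k : z_k(t) \not\equiv 0\}$ and $I = \{i : (\mathbf q_0)_i \ne 0\} \subseteq J$; since $H(\mathbf z(t)) \ne 0$ for $t>0$, neither $f^J$ nor $g^J$ is identically zero. Expand $z_i(t) = b_i + (\mathrm{h.t.})$ for $i \in I$ and $z_j(t) = b_j t^{p_j} + (\mathrm{h.t.})$ for $j \in J\setminus I$ with $b_k \ne 0$ and $p_j > 0$; set $p_i = 0$ for $i \in I$, $P = (p_k)$, and $\mathbf b \in \mathbb C^{*J}$. Expand also $\lambda(t) = \lambda_0 + \cdots$ and $\mathbf v(t) = \mathbf v_0 + \cdots$, where $(\mathbf v_0)_k = 0$ for $k \notin I$. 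Substituting the Taylor expansions of $f, g, \partial f, \partial g$ along $\mathbf z(t)$ from the alternative proof of Lemma~\ref{smoothness} and comparing component-wise leading $t$-orders in \eqref{afRel} after normalising $\mathrm{ord}\,\mu(t)$, the components indexed by $k \in J\setminus I$ (where $(v_0)_k = 0$ and $p_k > 0$) force the bracketed right-hand side to vanish at the corresponding $t$-power for each such $k$, while the components indexed by $k \in I$ determine $\mathbf v_0$ in terms of $f_P^J(\mathbf b), g_P^J(\mathbf b)$ and their gradients at $\mathbf b$.

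The remainder is the three-case analysis of Lemma~\ref{smoothness}. In Cases 1a/1b (exactly one of $f_P^J(\mathbf b), g_P^J(\mathbf b)$ vanishes), either $\mathbf v_0 \in \mathbb C\cdot\overline{\partial f^I(\mathbf q_0)}$ is forced, whereupon Hermitian pairing with the tangent condition $\mathbf v_0 \in T_{\mathbf q_0}M \subset \ker df^I$ and $\|\partial f^I(\mathbf q_0)\|^2 \ne 0$ (by locally tame non-degeneracy) gives a contradiction; or, when $\mathbb C^I$ is a vanishing subspace for $f$, the vanishing of $\partial_k f_P^J(\mathbf b)$ for every $k \in J\setminus I$ contradicts the locally tame non-degeneracy of $f_P^J$ on $\mathbb C^{*(J\setminus I)}$. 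In Case~2 ($f_P^J(\mathbf b) = g_P^J(\mathbf b) = 0$), the vectors $\partial f_P^J(\mathbf b)$ and $\partial g_P^J(\mathbf b)$ are $\mathbb C$-linearly independent by the locally tame non-degenerate complete-intersection hypothesis on $V(f,g)$, and the $\mathbb C$-span of their conjugates is the complementary plane to $T_{\mathbf b}(V(f_P^J)\cap V(g_P^J))$; combined with the tangency $\mathbf v_0 \in \ker df^I \cap \ker dg^I$, the leading-order identity forces $\mathbf v_0 = \mathbf 0$. In Case~3 ($f_P^J(\mathbf b), g_P^J(\mathbf b) \ne 0$), multiplying the vanishing identities on $k \in J\setminus I$ by $p_k \bar b_k$, summing, and invoking the Euler equalities for $f_P^J$ and $g_P^J$ gives
\[
d(P;f^J)\, g_P^J(\mathbf b)\,\overline{f_P^J(\mathbf b)} + \lambda_0\, d(P;g^J)\, f_P^J(\mathbf b)\,\overline{g_P^J(\mathbf b)} = 0,
\]
whence $d(P;f^J) = d(P;g^J)$ upon passing to absolute values (using $|\lambda_0| = 1$), which contradicts the tame Newton multiplicity condition~($\sharp$) under hypothesis~(i). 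Under hypothesis~(ii), the disjointness $\mathcal V_f\cap\mathcal V_g = \emptyset$ together with the isolated-critical-value hypothesis either eliminates Case~3 outright or reduces it to Case~1.

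The principal obstacle is Case~2, where the simultaneous vanishing of both face-function values at $\mathbf b$ requires a delicate match of $t$-orders between the two summands of \eqref{afRel}; the $\mathbb C$-linear independence of $\partial f_P^J(\mathbf b)$ and $\partial g_P^J(\mathbf b)$ supplied by the locally tame non-degenerate complete-intersection hypothesis is the decisive ingredient that closes this case, and is precisely the feature distinguishing the $f\bar g$ setting from the classical Milnor fibration of a single holomorphic function.
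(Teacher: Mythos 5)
There is a genuine gap, concentrated in your Case~2 (both $f_P^J(\mathbf b)=g_P^J(\mathbf b)=0$, which is the paper's case (c-1)). You correctly note that the locally tame non-degenerate complete-intersection hypothesis gives $\mathbb C$-linear independence of $\overline{\partial f_P}(\mathbf b)$ and $\overline{\partial g_P}(\mathbf b)$, but you then jump to ``the leading-order identity forces $\mathbf v_0=\mathbf 0$''. The difficulty this skips is that the \emph{normalised leading directions} of $\overline{\partial f}(\mathbf z(t))$ and $\overline{\partial g}(\mathbf z(t))$ along the arc need not coincide with $\overline{\partial f_P}(\mathbf b)$ and $\overline{\partial g_P}(\mathbf b)$: the orders $o_f=\ord\,\overline{\partial f}(\mathbf z(t))$ and $o_g=\ord\,\overline{\partial g}(\mathbf z(t))$ can exceed $d(P;f)-p_j$ or $d(P;g)-p_j$ for the relevant indices, because individual components $\partial f_P/\partial z_j(\mathbf b)$ may vanish. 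Consequently the two leading (normalised) gradient directions along the curve can be $\mathbb C$-linearly \emph{dependent} even though $\overline{\partial f_P}(\mathbf b)$ and $\overline{\partial g_P}(\mathbf b)$ are independent, and the leading-order comparison in your relation~\eqref{afRel} then collapses and gives no contradiction. The paper's proof of this case consists almost entirely of resolving that degeneracy: it replaces $\overline{\partial g}(\mathbf z(t))$ by $\mathbf v^{(k)}(t)=\overline{\partial g}(\mathbf z(t))-\rho_k(t)\overline{\partial f}(\mathbf z(t))$ iteratively, shows via the $2\times2$ minor in~(\ref{non-degenerate}) that this terminates after at most $d(P;g)-p_m-o_g$ steps with a strictly different leading index, and only then concludes the limit contains $\mathbb C^I$. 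That elimination argument is the technical heart of the theorem and is absent from your write-up. The same degeneracy recurs (mutatis mutandis, with $\bar\partial\Re H_P,\bar\partial\Im H_P$) in your Case~3, where the paper explicitly notes one must ``proceed the same argument as above''.

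Two smaller issues. First, your Curve Selection setup is not quite the negation of $a_f$-regularity: the failure of $a_f$ gives a limiting normal vector whose Hermitian pairing with $T_{\mathbf q_0}M$ is nonzero, not a normal vector whose limit actually lies in $T_{\mathbf q_0}M$; this can be repaired but requires an explicit projection or reformulation on the Grassmannian. Second, and more structurally, you work directly with the real normal $2$-plane of $V(H,\mathbf z(t))$, whereas the paper's proof uses the key inclusion $T_{\mathbf z(t)}V(H,\mathbf z(t))\supset T_{\mathbf z(t)}V(f,\mathbf z(t))\cap T_{\mathbf z(t)}V(g,\mathbf z(t))$ (Proposition~14 of \cite{fg-bar}) to reduce to the two complex gradient vectors $\overline{\partial f},\overline{\partial g}$; that reduction is what makes the iterative elimination tractable, since it replaces a real $2$-plane spanned by products with two \emph{complex} lines. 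Without it, the order bookkeeping in your \eqref{afRel} mixes the orders of $f,g,\partial f,\partial g$ in a way you do not actually carry out, and the case split on $(f_P(\mathbf b),g_P(\mathbf b))$ alone is not enough to drive the contradiction.
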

Note that in case (i), the tame  Newton multiplicity condition guarantees the isolatedness of the critical value of $H$.
For the proof, we consider the canonical toric stratification $\mathcal S$ on 
$V(fg)$.  We choose  $r_0, r_1\ge r_0>0$ sufficiently small so that 
for any $r\le r_0$, the canonical toric strata are smooth in $B_r^{2n}$ and 
any sphere $S_\rho^{2n-1}$ with $0<\rho\le r_0$ meets  transversally with every strata of $\mathcal S$ of positive dimension.
We use  Curve selection lemma (see \cite{Milnor, Hamm1}).
Suppose we have a real analytic curve
$\mathbf z(t),\,0\le t\le 1$   such that  $\mathbf z(0)=\mathbf a\in V(H)\cap B_{r_0}^{2n}$, $\mathbf a\ne \mathbf 0$ and $\mathbf z(t)\in \mathbb C^n\setminus V(H)$ for $t>0$.  Put $K=\{i\,|\, z_i(t)\not\equiv 0\}$ and write the expansion as 
\[\begin{split}
z_i(t)&=\al_it^{p_i}+\text{(higher terms)}, \, \al_i\ne 0,\, i\in K\\
&\equiv 0,\,\, i\notin K
\end{split}
\]
Let  $M\in \mathcal S^I$ be the stratum which contains $\mathbf a$.
We have to show that  the limit of the tangent space of the fiber $V(H,\mathbf z(t))$  at $\mathbf z(t)$  for $t\to 0$ contains the tangent space  of the stratum $M$ 
at $\mathbf a$.  
The restriction of $f,g$ and $H$ on $\mathbb C^K$ satisfy also the locally tame non-degenerate assumption.
As the argument    for the proof is exactly the same,
 we assume for simplicity  that $K=\{1,\dots, n\}$ hereafter. That is, we assume that  $\mathbf z(t)\in \mathbb C^{*n}$ for $t\ne 0$ and $\mathbf z(0)=\mathbf a$.
Put  $ P=(p_1,\dots, p_n)$ and 
\begin{eqnarray}\begin{cases}
&I:=\{i\,|\, p_i=0\},\,\,\\
& J=I^c=\{1,\dots,n\}\setminus I,\quad \mathbf w:=(\al_1,\dots,\al_n)\in \mathbb C^{*n}. 
\end{cases}
\end{eqnarray}
Note that $p_i=0$ if and only if $i\in I$. 
Thus $ \mathbf a=\mathbf w_I$ and $0\ne \|\mathbf a\|\le r_0$.
We will show that 
\[
\lim_{t\to 0}T_{\mathbf z(t)}V(H,\mathbf z(t))\supset T_{\mathbf a}M.
\]
We use the key property that the tangent space of the level hypersurface  $V(H,\mathbf z(t))$ at $\mathbf z(t)$ 
contains 
 the intersection of two tangent spaces of the level  complex hypersurfaces $V(f,\mathbf z(t))$ and $V(g,\mathbf z(t))$
by Proposition 14, \cite{fg-bar}. Here we are assuming that $r_0$ is sufficiently small so that $0$ is the only critical value for $f$ and $g$ on $B_{r_0}^{2n}$.
We divide the situation into three cases.
\begin{enumerate}
\item[(a)] $\mathbb C^I\notin \mathcal V_f\cup \mathcal V_g$, i.e. $f^I\not \equiv ,g^I \not \equiv 0$.
\item[(b)] $\mathbb C^I\in \mathcal V_f$  and $\mathbb C^I\notin \mathcal V_g$,  i.e. $f^I\equiv 0,\,g^I\not \equiv 0$.
\item[$(b)'$] $\mathbb C^I\in \mathcal V_g$  and $\mathbb C^I\notin \mathcal V_f$, i.e. $f^I\not\equiv 0$, $g^I\equiv 0$.
\item[(c)] $\mathbb C^I\in \mathcal V_f\cap \mathcal V_g$, i.e. $f^I\equiv 0$, $g^I\equiv 0$.

\end{enumerate}
As (b) and  $(b)'$ is symmetric, it is enough to  consider three cases (a), (b) and (c).  

We first consider the case (a).
The case (a) can be divided into two subcases:
\begin{enumerate}
\item[(a-1)]
$\mathbf a\in M= V^{*I}(f)\cap V^{*I}(g)$.
\item[ (a-2)] $\mathbf a\in V^{*I}(f)'=V^{*I}(f)\setminus V^{*I}(g)$, or
$\mathbf a\in V^{*I}(g)'=V^{*I}(g)\setminus V^{*I}(f)$.
\end{enumerate}
In the case (a-1),  $\mathbf a$ is a non-singular point of  $\mathbf a\in V(f,g)$.
As the tangent space $T_{\mathbf z(t)} V(f,\mathbf z(t))$ converges to $ T_{\mathbf a}V(f)$ which includes $T_{\mathbf a}V^{*I}(f)$ and 
$T_{\mathbf z(t)}V(g,\mathbf z(t)))$ converges to $ T_{\mathbf a}V(g)$ which includes  $T_{\mathbf a}V^{*I}(g)$ and
$T_{\mathbf a}M=T_{\mathbf a}V^{*I}(f)\cap T_{\mathbf a}V^{*I}(g)$ by the Newton non-degeneracy assumption,
the assertion follows from Proposition 14, \cite{fg-bar}.

 In the case (a-2), $\mathbf a\in V^{*I}(f)'$  or $\mathbf a\in V^{*I}(g)'$,
 $\mathbf a$ is  a non-singular point of $V(H)$
 and the assertion is obvious from the continuity of the tangent space.

Consider the case (b). Thus we assume that $\mathbb C^I\in \mathcal V_f\setminus\mathcal V_g$.
By the local tameness assumption, the limit of the normalized  holomorphic  gradient vector 
$\lim_{t\to 0} \overline{\partial\, f}(\mathbf z(t))/\|  \overline{\partial\, f}(\mathbf z(t))\|$ along $\mathbf z(t)$ is a vector 
in $\mathbb C^{J}$. Here $J=\{1,\dots,n\}\setminus I$.  (Recall $\partial f(\mathbf z)=(\frac{\partial f(\mathbf z)}{\partial z_1},\dots, \frac{\partial f(\mathbf z)}{\partial z_n})$.)
Thus the limit of the tangent space of $V(f,\mathbf z(t))$ contains $\mathbb C^I$ by the local tameness assumption.
There are two subcases.
\begin{enumerate}
\item[(b-1)] $\mathbf a\in V^{*I}(g)$, or
\item[(b-2)] $\mathbf a\in \mathbb C^{*I}\setminus V^{*I}(g)$.
\end{enumerate}%
 \noindent
Note that $M=V^{*I}(g)$ in the case (b-1)  and  $M=\mathbb C^{*I}\setminus V^{*I}(g)$  in the case (b-2) respectively.
In the case of (b-1),  the limit of the normalized vector of   $\overline{\partial f}(\mathbf z(t))$ is a vector in $\mathbb C^{J}$ by the local tameness assumption of $f$. Thus 
the limit of $T_{\mathbf z(t)}V(f,\mathbf z(t))$  includes $\mathbb C^I$.
On the other hand,   as $\overline{\partial g^I}(\mathbf a)$ is non-zero, $T_{\mathbf a}V(g)$ is transverse  to $\mathbb C^I$ at $\mathbf a$.
Thus for any sufficiently small $t$, they are transverse and the limit of the intersection of  two tangent space of the tangent space of $V(f,\mathbf z(t))$ and $V(g,\mathbf z(t))$ contains $T_{\mathbf a}V^{*I}(g)$.

Now
we consider the case (b-2).  We claim that the limit of the tangent space $T_{\mathbf z(t)}V(H,\mathbf z(t))$ 
 includes $\mathbb C^I$, the tangent space of the stratum $M=\mathbb C^{*I}\setminus V^{*I}(g)$ at $\mathbf a$.
First we prepare a sublemma.
\begin{Sublemma}\label{sublemma}
 Let  $f$ be a holomorphic function and write  $f(z)=k(\mathbf z,\bar{\mathbf z})+i \ell(\mathbf z,\bar{\mathbf z})$ where $k=\Re\, f,\,\ell=\Im\, f$.
Then we have $\bar \partial k=\frac 12 \overline{\partial f}$
and $\bar \partial \ell=\frac i{2 }\overline{\partial f}$. In particular, two gradient vectors
$\bar \partial k$ and $\bar \partial \ell$ are linearly dependent over $\mathbb C$ but  linearly independent over $\mathbb R$
at a non-critical point $\mathbf z$ of $f$.
\end{Sublemma}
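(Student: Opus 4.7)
The plan is to derive both identities by direct computation from the decomposition $k=(f+\bar f)/2$ and $\ell=(f-\bar f)/(2i)$, using only the fact that $\bar\partial f=0$ since $f$ is holomorphic.

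First I would apply the operator $\bar\partial=(\partial/\partial\bar z_1,\dots,\partial/\partial\bar z_n)$ to each expression. For $k$, this gives
\[
\bar\partial k=\tfrac{1}{2}\bar\partial f+\tfrac{1}{2}\bar\partial \bar f=\tfrac{1}{2}\bar\partial\bar f,
\]
since $\bar\partial f=0$. The identity $\bar\partial\bar f=\overline{\partial f}$ is obtained coordinatewise from $\partial\bar f/\partial\bar z_j=\overline{\partial f/\partial z_j}$, yielding $\bar\partial k=\tfrac{1}{2}\overline{\partial f}$. The analogous computation for $\ell=(f-\bar f)/(2i)$ gives $\bar\partial\ell=-\tfrac{1}{2i}\bar\partial\bar f=\tfrac{i}{2}\overline{\partial f}$, using $-1/(2i)=i/2$.

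For the dependence statement, the two identities imply the single relation $\bar\partial\ell=i\,\bar\partial k$, exhibiting $\mathbb{C}$-linear dependence. To verify $\mathbb{R}$-linear independence at a non-critical point $\mathbf z$ of $f$, I would observe that $\mathbf z$ being non-critical means $\partial f(\mathbf z)\neq 0$, hence $\overline{\partial f}(\mathbf z)\neq 0$, hence $\bar\partial k(\mathbf z)\neq 0$. Then any real relation $a\,\bar\partial k+b\,\bar\partial\ell=0$ would read $(a+ib)\,\bar\partial k=0$, forcing $a+ib=0$, which for $(a,b)\in\mathbb{R}^2$ means $a=b=0$.

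There is no real obstacle here: the statement is a routine computational lemma whose only subtlety is bookkeeping of the factors of $2$ and $i$, together with the elementary observation that a vector and its multiplication by $i$ in a complex vector space are always $\mathbb{R}$-linearly independent whenever the vector is nonzero.
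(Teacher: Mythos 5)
Your proof is correct and is essentially the same direct computation as the paper's: both derive the identities from $\bar\partial f=0$ (holomorphicity) and the relation between $\bar\partial\bar f$ and $\overline{\partial f}$, just organized differently — you start from $k=(f+\bar f)/2$, $\ell=(f-\bar f)/(2i)$, while the paper starts from $\bar\partial f=\bar\partial k+i\bar\partial\ell=0$ and $\partial f=\partial k+i\partial\ell$ together with $\overline{\partial k}=\bar\partial k$, $\overline{\partial\ell}=\bar\partial\ell$. Your final paragraph on $\mathbb{R}$-independence is also the same elementary observation the paper intends.
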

The assertion follows from the identities:
\[
\overline{\partial k}=\bar\partial k,\, \overline{\partial \ell}=\bar \partial \ell,\,\, \bar \partial f=\bar\partial k+i\bar\partial \ell=0,
\,\, \partial f=\partial k+i\partial \ell.
\]
Put $p_{min}=\min\{p_j\,|\, j\notin I\}$.
First we can write
\begin{Lemma} 
The orders of  $\bar \partial {\Re f}(\mathbf z(t))$ and $\bar \partial {\Im f}(\mathbf z(t))$ are equal  to the order of $\overline{\partial f}(\mathbf z(t))$. 
Put $s=\order\,\overline{\partial f}(\mathbf z(t))$. Then $s$ and strictly less than $d(P;f)-p_{min}$. We can write further as follows.
\[
\begin{split}
\overline{\partial f}(\mathbf z(t))&=\mathbf v t^s+\text{(higher terms)},\, \, \exists\mathbf v\in \mathbb C^{J}\\
\bar\partial {\Re f}(\mathbf z(t))&=\frac 12 \mathbf v t^s+\text{(higher terms)}\\
\bar\partial {\Im f}(\mathbf z(t))&=\frac i2 \mathbf v t^s+\text{(higher terms)}
\end{split}
\]
In particular,
$lim_{t\to 0} T_{\mathbf z(t)} V(f,\mathbf z(t))$ is the complex orthogonal of $\mathbf v$.
\end{Lemma}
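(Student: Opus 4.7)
The plan is to read off the claim from Sublemma \ref{sublemma} together with a componentwise Taylor expansion of the holomorphic gradient, using the local tameness of $f$ at the vanishing coordinate subspace $\mathbb C^I$ as the only nontrivial input.

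First, applying Sublemma \ref{sublemma} along $\mathbf z(t)$ immediately gives $\bar\partial \Re f(\mathbf z(t)) = \tfrac12\overline{\partial f}(\mathbf z(t))$ and $\bar\partial \Im f(\mathbf z(t)) = \tfrac{i}{2}\overline{\partial f}(\mathbf z(t))$, so all three vectors share the same $t$-order and their leading coefficients are scalar multiples of a common vector $\mathbf v$; hence it suffices to identify the expansion of $\overline{\partial f}(\mathbf z(t))$. Using the standard weighted-homogeneity identity
\[
\frac{\partial f}{\partial z_k}(\mathbf z(t)) = \frac{\partial f_P}{\partial z_k}(\mathbf b)\, t^{\,d(P;f)-p_k} + (\text{higher terms}),
\]
one obtains $\ord\,\overline{\partial f/\partial z_k}(\mathbf z(t)) \geq d(P;f)-p_k$ for every $k$. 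For $i\in I$ the weight $p_i=0$ forces this order to be at least $d(P;f)$, while for $j\in J$ one only has the bound $d(P;f)-p_j$, which is strictly less than $d(P;f)$.

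The crux of the argument is to show that at least one $J$-component actually realizes a strictly smaller order than $d(P;f)$. This is where local tameness enters: by shrinking $r_0$ at the outset of the Curve Selection setup, I can guarantee $\sum_{i\in I}|a_i|^2 \leq r_I$, so the local tameness of $f$ at $\mathbb C^I$ asserts that $f_P$, viewed as a function of $(z_j)_{j\in J}$ with $(z_i)_{i\in I}$ fixed at $\mathbf b_I = \mathbf a$, has no critical points on $\mathbb C^{*J}$. Since $\mathbf b_J \in \mathbb C^{*J}$, some $j^*\in J$ must satisfy $\partial f_P/\partial z_{j^*}(\mathbf b)\neq 0$, and therefore
\[
s \leq d(P;f)-p_{j^*} \leq d(P;f)-p_{\min} < d(P;f).
\]
Because every $I$-component has order at least $d(P;f)>s$, the leading vector $\mathbf v$ of $\overline{\partial f}(\mathbf z(t))/t^s$ automatically lies in $\mathbb C^J$ and is nonzero; the expansions of $\bar\partial\Re f$ and $\bar\partial\Im f$ with leading coefficients $\tfrac12\mathbf v$ and $\tfrac{i}{2}\mathbf v$ then follow from the first step.

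Finally, $T_{\mathbf z(t)} V(f,\mathbf z(t))$ is the Hermitian complex orthogonal complement of $\overline{\partial f}(\mathbf z(t))$, a hyperplane that depends only on the direction of the gradient; rescaling by $t^{-s}$ and letting $t\to 0$ replaces the direction of the gradient by that of $\mathbf v$, so the limit hyperplane is the complex orthogonal of $\mathbf v$. The only genuine obstacle is the preparatory step of confirming the tameness-ball inclusion $\|\mathbf a\|^2 \leq r_I$ for the limit point $\mathbf a$; once $r_0$ is chosen small enough for this, the remainder reduces to a transparent comparison of $t$-orders.
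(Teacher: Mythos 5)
Your proposal is correct, and it fills in a proof that the paper leaves implicit (the Lemma is stated after the Sublemma with no argument of its own). The ingredients you use are exactly those the paper has prepared: Sublemma~\ref{sublemma} collapses the three order computations to one; the componentwise expansion $\partial f/\partial z_k(\mathbf z(t))=\partial f_P/\partial z_k(\mathbf w)\,t^{d(P;f)-p_k}+\cdots$ gives $\ord\ge d(P;f)$ for $k\in I$ (since $p_k=0$) and $\ord\ge d(P;f)-p_k$ for $k\in J$; and local tameness of $f$ at $\mathbb C^I$ supplies a $j^*\in J$ with $\partial f_P/\partial z_{j^*}(\mathbf w)\neq 0$ (as you observe, $f_P$ restricted to $\mathbf z_I=\mathbf a$ is weighted homogeneous in $\mathbf z_J$ with strictly positive weights, so non-degeneracy means no critical point on $\mathbb C^{*J}$, and you correctly flag the needed inclusion $\sum_{i\in I}|\alpha_i|^2\le r_I$ that comes from shrinking $r_0$). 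This forces $s\le d(P;f)-p_{j^*}\le d(P;f)-p_{\min}<d(P;f)$, so the $I$-components of $\overline{\partial f}(\mathbf z(t))/t^s$ vanish in the limit and $\mathbf v\in\mathbb C^J$, and the identification of the limiting tangent hyperplane as $\mathbf v^{\perp_{\mathbb C}}$ follows by continuity of the Hermitian orthogonal. One small remark: the Lemma's phrase ``strictly less than $d(P;f)-p_{\min}$'' is evidently a typographical slip; the bound one actually has and needs is $s\le d(P;f)-p_{\min}$, which is strictly less than $d(P;f)$, and that is precisely what you establish.
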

Now we are ready to analyze the  case (b-2).  Note that 
the limit of normalized gradient vector 
$ \overline{\partial f}(\mathbf z(t))$ is $\mathbf v/\|\mathbf v\|$.
For  a vector $\mathbf v$, let $\mathbf v^{\perp_{\mathbb C}}$ be the subspace of $\mathbb C^n$ which are complex orthogonal to $\mathbf v$.
Namely
${\mathbf v}^{\perp_\mathbb C}=\{\mathbf w\in \mathbb C^n\,|\, (\mathbf w,\mathbf v)=0\}$. Now we claim 
%
\begin{Assertion}
Assume $\mathbf a\in \mathbb C^{*I}\setminus V^{*I}(g)$. Then 
$\lim_{t\to 0}T_{\mathbf z(t)}V(H,\mathbf z(t))
=\lim_{t\to 0}T_{\mathbf z(t)} V(f,\mathbf z(t))$.
\end{Assertion}
\begin{proof}
 Put $b:=\bar g(\mathbf a)$ and write $b=b_1+ib_2$ with $b_1,b_2\in \mathbb R$. 
 First we use the equalities:
 \[\Re(H)=\Re(f)\Re(\bar{ g})-\Im(f)\Im(\bar{ g}), \quad
  \Im(H)=\Re( f)\Im(\bar{ g})+\Im(f)\Re(\bar{g}).
\]
Then the gradient vectors are given as 
\[
\begin{split}
\bar\partial\Re(H)(\mathbf z(t))
 &= (\bar\partial\Re( f)\Re({\bar{g}})(\mathbf z(t))+(\Re( {f})\bar\partial\Re(\bar{ g}))(\mathbf z(t))\\
 &-(\bar\partial\Im(f)\Im(\bar{g}))(\mathbf z(t))- (\Im( f)\bar\partial\Im(\bar{ g}))(\mathbf z(t))\\
&\equiv b_1\bar\partial \Re( f)(\mathbf z(t)) -  b_2\bar\partial\Im f(\mathbf z(t))\,\,\modulo \, (t^{s+1})\\
&\equiv \frac{\mathbf v\bar b}2 t^s\, \,\modulo \, (t^{s+1})\\
\bar\partial\Im( H)(\mathbf z(t))&= (\bar\partial\Re(f )\Im{\bar{ g}})(\mathbf z(t))+(\Re(  f)\bar\partial\Im(\bar{g}))(\mathbf z(t))\\
 & +\bar\partial\Im(f)\Re(\bar{ g})(\mathbf z(t)) + \Im(f)\bar\partial\Re(\bar{ g})(\mathbf z(t))\\
&\equiv b_2(\bar\partial\Re(f )(\mathbf z(t))+
b_1\bar\partial \Im(f)(\mathbf z(t))\, \,\modulo\,(t^{s+1})\\
&\equiv \frac{(i\mathbf v  \bar b)}2 t^s\, \,\modulo\,(t^{s+1})\\
\end{split}
\]
and therefore the normalized vector of these  gradient vectors $\bar\partial\Re(H)(\mathbf z(t))$ and 
$\bar\partial\Im(H)(\mathbf z(t))$
 converges to the vectors

\[
\frac{\mathbf v \bar b}{\| \mathbf v \bar b\|},\,\quad\,i\frac{\mathbf v\bar b}{\|\mathbf v\bar b \|}
\]
respectively. This implies
the limit of  the tangent space $T_{\mathbf z(t)} V (H,\mathbf z(t))$ is  the real orthogonal of the real 2-dimensional subspace span by these two vectors, that is nothing but the complex subspace $\mathbf v^{\perp_{\mathbb C}}$
which  is equal to  the limit of
$T_{\mathbf z(t)}V(f,\mathbf z(t))$. The proof of the assertion for (b-2) is now completed.  The case $\{f,g\}$ is a disjoint tame non-degenerate
complete intersection pair is now proved.

Now we consider the last case (c) $\mathbb C^I\in \mathcal V_f\cap \mathcal V_g$. 
Recall that $\mathbf w=(\al_1,\dots,\al_n)$.
We divide the situation into three subcases.
\begin{enumerate}
\item[(c-1)]$f_P(\mathbf w)=g_P(\mathbf w)=0$.
\item[(c-2)] $f_P(\mathbf w)=0$ and $g_P(\mathbf w)\ne 0$ or  (c-2)' $f_P(\mathbf w)\ne 0$ and $g_P(\mathbf w)= 0$.
\item [(c-3)]$f_P(\mathbf w)\ne 0,\,g_P(\mathbf w)\ne 0$.
\end{enumerate}

We restate the assertion as
 the following lemma. 
\begin{Lemma}
Assume that $\mathbb C^{I}\in \mathcal V_f\cap \mathcal V_g$. The the limit of the tangent space $T_{\mathbf z(t) }V(H,\mathbf z(t))$
includes $\mathbb C^I$ as a subspace.
\end{Lemma}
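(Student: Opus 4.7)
The plan is to adapt the asymptotic computation of subcase (b-2) to the situation where both face functions $f^I$ and $g^I$ vanish identically. The key structural claim to prove is that along the curve $\mathbf{z}(t)$, the $I$-components of $\bar\partial\Re H(\mathbf{z}(t))$ and $\bar\partial\Im H(\mathbf{z}(t))$ are of strictly higher $t$-order than their $J$-components. Once this is established, the normalized vectors $\bar\partial\Re H/\|\bar\partial\Re H\|$ and $\bar\partial\Im H/\|\bar\partial\Im H\|$ converge to unit vectors in $\mathbb{C}^J$, the real $2$-plane they span has its Grassmannian limit contained in $\mathbb{C}^J$, and the real orthogonal complement---which equals $\lim_{t\to 0}T_{\mathbf{z}(t)}V(H,\mathbf{z}(t))$---contains $\mathbb{C}^I$ since $\mathbb{C}^I$ and $\mathbb{C}^J$ are real-orthogonal in $\mathbb{C}^n$.

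First I would establish the gradient expansions
\[
\overline{\partial f}(\mathbf{z}(t))=\mathbf{v}_f\,t^{s_f}+(\text{higher}),\qquad \overline{\partial g}(\mathbf{z}(t))=\mathbf{v}_g\,t^{s_g}+(\text{higher}),
\]
with $\mathbf{v}_f,\mathbf{v}_g\in\mathbb{C}^J\setminus\{\mathbf{0}\}$, $s_f<d(P;f)$, $s_g<d(P;g)$. The containment in $\mathbb{C}^J$ is automatic from $p_i=0$ for $i\in I$: the $i$-th component of $\overline{\partial f}(\mathbf{z}(t))$ has order at least $d(P;f)$, whereas for $j\in J$ with $\partial_j f_P(\mathbf{w})\neq 0$ the $j$-th component has the strictly smaller order $d(P;f)-p_j$. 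The nontriviality $\partial_J f_P(\mathbf{w})\neq 0$ requires a separate argument in each subcase: in (c-1), $\mathbf{w}\in V(f_P,g_P)\cap\mathbb{C}^{*n}$ is a smooth point of the non-degenerate complete intersection by condition (2-b), so $\partial_J f_P(\mathbf{w})$ and $\partial_J g_P(\mathbf{w})$ are linearly independent; in (c-3), Euler's identity $d(P;f)f_P(\mathbf{w})=\sum_{j\in J}p_j w_j\partial_j f_P(\mathbf{w})$ combined with $f_P(\mathbf{w})\neq 0$ forces the conclusion; and (c-2) simply combines these two types of arguments, one for $f$ (local tameness) and one for $g$ (Euler).

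The second step uses the product-rule identities $\bar\partial H=f\,\overline{\partial g}$ and $\overline{\partial H}=g\,\overline{\partial f}$, which yield
\[
\bar\partial\Re H=\tfrac{1}{2}\bigl(f\,\overline{\partial g}+g\,\overline{\partial f}\bigr),\qquad \bar\partial\Im H=\tfrac{-i}{2}\bigl(f\,\overline{\partial g}-g\,\overline{\partial f}\bigr).
\]
Evaluated at $\mathbf{z}(t)$, each of the two summands $f\,\overline{\partial g}$ and $g\,\overline{\partial f}$ is a scalar power of $t$ multiplying a vector whose $I$-components are of order at least $\min_{j\in J}p_j>0$ larger than the minimum order of its $J$-components, and this component-wise order gap is preserved under linear combinations. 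The structural claim of the first step follows, completing the argument.

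The step I expect to be most delicate is subcase (c-1), where both $f_P(\mathbf{w})=0$ and $g_P(\mathbf{w})=0$: there $f(\mathbf{z}(t))$ and $g(\mathbf{z}(t))$ themselves have orders $d'_f>d(P;f)$ and $d'_g>d(P;g)$ that are not controlled by the face functions, so the clean ``leading-term substitution'' used in subcase (b-2) does not apply directly. The robustness of the argument rests on the observation that the decisive $I$-versus-$J$ order gap lives in the vector factors $\overline{\partial f}$ and $\overline{\partial g}$---an intrinsic feature of differentiation, since $p_i=0$ for $i\in I$ forbids any weight cancellation in those directions---and is therefore untouched by whatever orders the scalar prefactors happen to have. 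Confirming this uniformly across all three subcases, and checking that the limiting real $2$-plane does not leak an $I$-direction even when the two summands partly cancel, is the bookkeeping to handle carefully.
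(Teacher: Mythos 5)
Your plan---controlling the Grassmannian limit of the real $2$-plane $\left\langle\bar\partial\Re H,\bar\partial\Im H\right\rangle_{\mathbb{R}}$ directly and uniformly in all three subcases---is a genuinely different route from the paper's, which for (c-1) and (c-2) instead studies the codimension-four subspace $T_{\mathbf z(t)}V(f,\mathbf z(t))\cap T_{\mathbf z(t)}V(g,\mathbf z(t))$, whose orthogonal complement is the \emph{complex} $2$-plane $\left\langle\overline{\partial f},\overline{\partial g}\right\rangle_{\mathbb C}$, and then runs a complex Gram--Schmidt loop (the $\mathbf v^{(k)}=\overline{\partial g}-\rho_k(t)\overline{\partial f}$ procedure) to manufacture two generators with $\mathbb{C}$-linearly \emph{independent} limits in $\mathbb{C}^J$. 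But the key inference in your write-up is false as stated. Knowing that the two normalized generators of a real $2$-plane both converge to vectors in $\mathbb{C}^J$ does \emph{not} imply that the Grassmannian limit of the plane lies in $\mathbb{C}^J$: if the two normalized generators converge to the same direction, a new direction can emerge from the $\mathbb{R}$-linear combination that kills the common leading part. A toy example in $\mathbb{R}^3$ with $J=\{1,2\}$, $I=\{3\}$: take $v_1(t)=(1,0,t)$ and $v_2(t)=(1,t^3,t-t^2)$. Each has $I$-order strictly larger than its minimum $J$-order, and each normalizes to $(1,0,0)\in\mathbb{R}^J$, yet $v_2-v_1=(0,t^3,-t^2)$ normalizes to $(0,0,-1)$, so $\lim_{t\to 0}\left\langle v_1,v_2\right\rangle_{\mathbb{R}}=\left\langle(1,0,0),(0,0,-1)\right\rangle_{\mathbb{R}}\not\subset\mathbb{R}^J$. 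The same defect afflicts your claim that ``the component-wise order gap is preserved under linear combinations'': when $\ord\,(g\,\overline{\partial f}(\mathbf z(t)))$ and $\ord\,(f\,\overline{\partial g}(\mathbf z(t)))$ agree, the leading $J$-blocks of the two summands can cancel, and after cancellation the $I$- and $J$-orders of the sum may coincide.

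What actually forbids the cancellation is the non-degenerate complete intersection hypothesis, invoked \emph{at this very step}, not only to show $\partial_J f_P(\mathbf w)\neq 0$ and $\partial_J g_P(\mathbf w)\neq 0$. Concretely, every vector of the real $2$-plane has the form $a\,\bar\partial\Re H+b\,\bar\partial\Im H=\tfrac{1}{2}\bigl(\bar c\,g\,\overline{\partial f}+c\,f\,\overline{\partial g}\bigr)$ with $c=a-ib$, so one must verify that for every $c$ on the unit circle the leading $J$-coefficient vector $\bar c\,\beta\,\overline{\partial_J f_P(\mathbf w)}+c\,\gamma\,\overline{\partial_J g_P(\mathbf w)}$ is nonzero; it is here that the $\mathbb{C}$-linear independence of $\partial_J f_P(\mathbf w)$ and $\partial_J g_P(\mathbf w)$ (condition (2-b) in (c-1), the weighted-Euler argument in (c-2), strongly polar weighted homogeneity of $H_P$ in (c-3)) does the essential work. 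Your ``robustness'' heuristic---that the $I$-versus-$J$ gap lives in the vector factors $\overline{\partial f}$, $\overline{\partial g}$ and cannot be disturbed by the scalar prefactors---misses exactly this: the scalars $f(\mathbf z(t))$, $g(\mathbf z(t))$ can line up with a $\mathbb{C}$-dependency between the leading $J$-blocks of $\overline{\partial f}$ and $\overline{\partial g}$ (which non-degeneracy does not forbid, since it only constrains the full vectors $\partial_J f_P(\mathbf w)$, $\partial_J g_P(\mathbf w)$) so that the two summands cancel at leading order. This is precisely the phenomenon the paper's $\mathbf v^{(k)}$ loop is built to circumvent, and without putting the non-degeneracy to work at this point the argument does not close.
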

\begin{proof}
First assume that $f_P(\mathbf w)=g_P(\mathbf w)=0$.
Put $\overline{\partial f}(\mathbf z(t))=(u_1(t),\dots, u_n(t)) $ and 
$\overline{\partial g}(\mathbf z(t))=(v_1(t),\dots, v_n(t))$. 
We can write as
\[\begin{split}
u_j(t)&=\overline{\frac{\partial f_P}{\partial z_j}}(\mathbf w)t^{d(P;f)-p_j}+\text{(higher terms)}\\
v_j(t)&=\overline{\frac{\partial g_P}{\partial z_j}}(\mathbf w)t^{d(P;g)-p_j}+\text{(higter terms)}.
\end{split}
\]
Put $o_f$ and $o_g$ be the orders of $\overline{\partial f}(\mathbf z(t))$ and $\overline{\partial g}(\mathbf z(t))$ respectively. That is 
$o_f=\min\,\{\ord_t u_i(t)\,|\, i=1,\dots, n\}$ and 
$o_g=\min\{\ord\, v_i(t)\,|\, i=1,\dots,n\}$. 
Then the limit of $\overline{\partial f}(\mathbf z(t))$  and $\overline{\partial g}(\mathbf z(t))$ up to   scalar multiplications
 are represented  respectively by
\begin{eqnarray}\label{limit}
\lim_{t\to 0}\frac 1{t^{o_f}}\overline{\partial f}(\mathbf z(t)),\quad \lim_{t\to 0}\frac 1{t^{o_g}}\overline{\partial g}(\mathbf z(t)).
\end{eqnarray}
We denote these limit vectors as 
$\lim_{t\to 0}^{(n)}\overline{\partial f}(\mathbf z(t))$  and $\lim_{t\to 0}^{(n)}\overline{\partial g}(\mathbf z(t))$.
If these two limits are linearly independent over $\mathbb C$, the intersection 
\[
T_{\mathbf z(t)}V(f,\mathbf z(t))\cap T_{\mathbf z(t)}V(g,\mathbf z(t))
\]
 converges to the the complex orthogonal subspace to these two limit vectors.
 That is,
 \[
 <\overline{\partial f}(\mathbf z(t)),\overline{\partial g}(\mathbf z(t))>^{\perp_{\mathbb C}}\mapsto 
<{\lim_{t\to 0}}^{(n)}\overline{\partial f}(\mathbf z(t)),{\lim_{t\to 0}}^{(n)}\overline{\partial g}(\mathbf z(t))>^{\perp_{\mathbb C}}.
 \]
The problem happens if these two limits are linearly dependent.
We use a similar argument as the one which is used  in the proof of  Theorem 20, \cite{OkaAf} or 
Theorem 3.14, \cite{EO}
to solve this problem.
For the simplicity of the argument, we assume that $J=\{1,\dots,m\}$ and $I=\{m+1,\dots, n\}$
and we assume that 
\[p_1\ge p_2\ge \cdots\ge p_m>0,\, p_{m+1}=\cdots=p_n=0.
\]
Note that $p_{min}=p_m$ under the above assumption and
\[
\begin{split}
&\ord\,u_j(t)\ge d(P;f)-p_j,\quad \ord\, v_j(t)\ge d(P;g)-p_j,\,j=1,\dots,m
\end{split}
\]
while for $j\ge m+1$,
\[\begin{split}
&\ord\,u_jt)\ge d(P;f),\quad \ord\, v_j(t)\ge d(P;g),\,j=m+1,\dots,n.
\end{split}
\]
Now we consider first (c-1): $f_P(\mathbf w)=g_P(\mathbf w)=0$.
By the locally tame non-degeneracy assumption, there exists $1\le a, b\le m,\,a\ne b$ so that  we have
\begin{eqnarray}\label{non-degenerate}
&&\det
\left(
\begin{matrix}
\overline{\frac{\partial f_P}{\partial z_a}}(\mathbf w)&\overline{\frac{\partial f_P}{\partial z_b}}(\mathbf w)\\
\overline{\frac{\partial g_P}{\partial z_a}}(\mathbf w)&\overline{\frac{\partial g_P}{\partial z_b}}(\mathbf w)\\
\end{matrix}
\right)\ne 0.
\end{eqnarray}
Here we  assume that $a\ne b$ but we do not assume that $a<b$.
In particular, 
\[\begin{split}
o_f\le \max\{d(P;f)-p_a,d(P;f)-p_b\}\le d(P;f)-p_m,\, \\
o_g\le \max\{d(P;g)-p_a,d(P;g)-p_b\}\le d(P;g)-p_m.
\end{split}
\]
For simplicity, we may assume that $o_f\le o_g$ and consider 
\[
\ell_0:=\min\{j\,|\, \ord\, u_j(t)=o_f\},\quad m_0:=\min\{j\,|\, \ord\,v_j(t)=o_g\}.
\]
We call $\ell_0$, $m_0$  {\em the leading indices of $\overline{\partial f}(\mathbf z(t)$ and $\overline{\partial g}(\mathbf z(t))$.}

Case 1. Assume that $\ell_0\ne m_0$. Then the two limit gradient vectors given by (\ref{limit}) are already linearly independent. There are nothing to do further.

Case 2. Assume that $\ell_0=m_0$.   Then we take  a monomial function $\rho(t)=c t^{o_g-o_f},\,c\in \mathbb C$ and replace 
$\partial g(\mathbf z(t))$ by
\[
\mathbf v^{(1)}(t)=\overline{\partial g}(\mathbf z(t))-\rho(t)\overline{\partial f}(\mathbf z(t)). 
\]
We choose a constant $c$  so that 
$\ord\, v_{m_0}^{(1)}(t)>o_g$.  We put   $\ord\,v_{m_0}^{(1)}(t)=\infty$ if $v_{m_0}^{(1)}(t)\equiv 0$. Here 
$  v_{m_0}^{(1)}(t)$ is the $m_0$-th component of $\mathbf v^{(1)}(t)$.
Note that the two dimensional  complex subspace $W=\left<\overline{\partial f}(\mathbf z(t)), \overline{\partial g}(\mathbf z(t))\right>$ generated by 
$\{\overline{\partial f}(\mathbf z(t)), \overline{\partial g}(\mathbf z(t))\}$ is the same with subspace
$\left<\overline{\partial f}(\mathbf z(t)), \mathbf v^{(1)}(t)\right>$
 generated by $\{\overline{\partial f}(\mathbf z(t)), \mathbf v^{(1)}(t)\}$.
Thus their complex orthogonal subspaces are also equal.
We continue this operation
\[
\overline{\partial g}\to \mathbf v^{(1)}\to \dots \to \mathbf v^{(k)}
\]
 until the leading index of $\mathbf v^{(k)}$ changes. 
 Note that the k-times operation $\partial g(\mathbf z(t))\to \mathbf v^{(k)}(t)$ is given as
 \[
  \mathbf v^{(k)}(t)=\overline{\partial g}(\mathbf z(t))-\rho_k(t)\overline{\partial f}(\mathbf z(t))
 \]
 where $\rho_k(t)$ is a polynomial of variable $t$ whose lowest degree is $o_g-o_f$.
 By (\ref{non-degenerate}),  we may assume that ${\partial g_P}/{\partial z_a}(\mathbf w)\ne 0$.
 Note that $\ord\,  v^{(\nu)}_{m_0}(t)$ is strictly increasing as long as $\nu\le k-1$
 and $\ord\,\mathbf v^{(\nu)}(t)=\ord\, v^{(\nu)}_{m_0}(t)$. Let us look the components $a, b$ which is given by
 \[
v_{\tau}^{(k)}(t)=\overline{\frac{\partial g}{\partial z_\tau}}(\mathbf z(t))-\rho_k(t)\overline{\frac{\partial f}{\partial z_\tau}}(\mathbf z(t)),\,\,\tau=a,b.
 \]
 \begin{Assertion}One of the following inequalities holds.
  \[
  \ord\,v_{a}^{(k)}(t)\le d(P;g)-p_a,\quad \text{or}\quad\ord\,  v_{b}^{(k)}(t)\le d(P;g)-p_b.
  \]
 \end{Assertion}
 \begin{proof}
 Assume that $\ord\, v_a^{(k)}>d(P;g)-p_a$. We will show that $\ord\,  v_{b}^{(k)}(t)\le d(P;g)-p_b$.
 As the first term of $\rho_k(t) \overline{\frac{\partial f}{\partial z_a}}(\mathbf z(t))$  kill the first term of $\overline{\frac{\partial g}{\partial z_a}}(t)$,
 the order of 
 $\rho_k(t)\overline{\frac{\partial f}{\partial z_a}}(t)$ is equal to $d(P;g)-p_a$.
 There are two cases to be considered.
 
 (A) $\frac{\partial f_P}{\partial z_a}(\mathbf w)\ne 0$ or  (B) $\frac{\partial f_P}{\partial z_a}(\mathbf w)=0$.
 
 Assume the case (A). Then  we have $\ord\,\rho_k(t)=d(P;g)-d(P;f)=o_g-o_f$ and which implies 
 that 
 \[
 \ord\,\rho_k(t)\overline{\frac{\partial f_P}{\partial z_b}}(\mathbf z(t))\ge (d(P;f)-p_b)+(d(P;g)-d(P;f))=d(P;g)-p_b. 
 \]
 Then putting $\la$  be the coefficient of $t^{d(P;g)-d(P;f)}$ in  $\rho_k(t)$, we have
 \[\begin{split}
 0\ne 
& \det
\left(
\begin{matrix}
\overline{\frac{\partial f_P}{\partial z_a}}(\mathbf w)&\overline{\frac{\partial f_P}{\partial z_b}}(\mathbf w)\\
\overline{\frac{\partial g_P}{\partial z_a}}(\mathbf w)&\overline{\frac{\partial g_P}{\partial z_b}}(\mathbf w)\\
\end{matrix}
\right)\\
&=
\det
\left(
\begin{matrix}
\overline{\frac{\partial f_P}{\partial z_a}}(\mathbf w)&\overline{\frac{\partial f_P}{\partial z_b}}(\mathbf w)\\
\overline{\frac{\partial g_P}{\partial z_a}}(\mathbf w)+\la \overline{\frac{\partial f_P}{\partial z_a}}(\mathbf w)
&\overline{\frac{\partial g_P}{\partial z_b}}(\mathbf w)+\la \overline{\frac{\partial f_P}{\partial z_b}}(\mathbf w)
\end{matrix}
\right)
\\
&=
\det
\left(
\begin{matrix}
\overline{\frac{\partial f_P}{\partial z_a}}(\mathbf w)&\overline{\frac{\partial f_P}{\partial z_b}}(\mathbf w)\\
0
&\overline{\frac{\partial g_P}{\partial z_b}}(\mathbf w)+\la \overline{\frac{\partial f_P}{\partial z_b}}(\mathbf w)
\end{matrix}
\right)
\end{split}
 \]
 and thus
 $\overline{\frac{\partial g_P}{\partial z_b}}(\mathbf w)+\la \overline{\frac{\partial f_P}{\partial z_b}}(\mathbf w)\ne 0$ by (\ref{non-degenerate}),
 $\ord\,  v^{(k)}_b=d(P;g)-p_b$.
  
 Consider the case (B) now.  Then $\ord\,\rho_k(t)<d(P;g)-d(P;f)$. By (\ref{non-degenerate}), 
 $\frac{\partial f_P}{\partial z_b}(\mathbf w)\ne 0$ and $\ord\, v^{(k)}(t)_b<d(P;g)-p_b$.
 Thus in both cases, under the above operation, $\ord\,  v^{(k)}(t)\le d-p_m$. 
 \end{proof}
 The above argument implies  that  the number $k$ of operations is bounded by $k\le d(P;g)-p_m-o_g$.
  At the last operation, the leading index of $\mathbf v^{(k)}(t)$ is different from $m_0$ and  the limit vector of $\mathbf v^{(k)}(t)$ and $\overline{\partial f}(\mathbf z(t))$
 are linearly independent and they are in the subspace $\mathbb C^J$.
As \nl
$T_{\mathbf z(t)} V(f,\mathbf z(t))\cap T_{\mathbf z(t)} V(g,\mathbf z(t))$  is the complex orthonormal subspace of the two dimensional subspace
 $\left <\overline{\partial f}(\mathbf z(t)), \overline{\partial g}(\mathbf z(t))\right>_{\mathbb C}$
 and it is equal to  the complex  orthonormal subspace of 
 $\left<\overline{\partial f}(\mathbf z(t)),\mathbf v^{(k)}(t)\right>_{\mathbb C}$, the limit of $T_{\mathbf z(t)}V( f,\mathbf z(t))\cap T_{\mathbf z(t)} V(g,\mathbf z(t))$
 includes the vanishing subspace $\mathbb C^I$.
 As $T_{\mathbf z(t)} V(H,\mathbf z(t))$ includes  
 $T_{\mathbf z(t)} V(f,\mathbf z(t))\cap T_{\mathbf z(t)} V(g,\mathbf z(t))$ as a subspace,
 $\lim_{t\to 0}T_{\mathbf z(t)} V(H,\mathbf z(t))\supset \mathbb C^I$.
 Thus
 the proof of case (c-1) is done.
 \nl
 Now we consider the case (c-2):  $f_P(\mathbf w)=0,g_P(\mathbf w)\ne 0$. 
 Consider  the hypersurface $V:=\{(\mathbf z_J\in \mathbb C^{*J}\,|\, f_P(\mathbf z)=0,\, \mathbf z_I=\mathbf a\}$.
 By the locally tameness assumption, $V$ is a non-singular hypersurface in $\mathbb C^{*I}$. Let us consider
 the restriction  $g_P:V\to \mathbb C$.
 As $g_P$ is a weighted homogeneous polynomial function of weight $P_J=(p_{1},\dots, p_m)$ and $V$ is invariant under the associated $\mathbb C^*$ action on $\mathbb C^J$,
 $g_P$ has no non-zero critical value on $V$. Namely 
 $\bar \partial g_P(\mathbf w)$ is non-zero and linearly independent with $\bar \partial f_P(\mathbf w)$.
 Thus there is a pair $a\le b\le m$ which satisfies $(\ref{non-degenerate})$. Thus the rest of the argument is the exact same as above and 
 $\lim_{s\to 0}T_{\mathbf z(s)}V(H,\mathbf z(s))\supset \mathbb C^I$. The case (c-2)':  $f_P(\mathbf w)\ne 0$ and $g_P(\mathbf w)=0$ is treated similarly.
 \nl
 Now we consider the case (c-3): $f_P(\mathbf w), g_P(\mathbf w)\ne 0$.
 In this case, we need the assumption that $f,g$ satisfies the  tame Newton multiplicity condition.
 Let $\deg_Pf=d_f$ and $\deg_P g=d_g$.  Put $d_r:=d_f+d_g$ and $d_p=d_f-d_g$. Then the  tame Newton multiplicity condition implies  $d_p\ne 0$. The mixed function
 $H_P=f_P\bar g_P:\mathbb C^{J}\to \mathbb C$  is a  strongly mixed weighted homogeneous polynomial which satisfies
 $H(\rho e^{i\theta}\circ\mathbf z)=\rho^{d_r} e^{d_p\theta} H(\mathbf z)$ and thus $0$ is the only critical value.
 Thus $\bar \partial \Re H_P(\mathbf w),\bar \partial \Im H_P(\mathbf w)$ are linearly independent over $\mathbb R$ and we can proceed the same argument as above replacing $\bar\partial f,\bar \partial g$ to 
 $\bar \partial \Re H(\mathbf z(t)),\bar \partial \Im H(\mathbf z(t))$ to conclude the real two dimensional subspace
 $\langle \bar \partial \Re H(\mathbf z(t)),\bar \partial \Im H(\mathbf z(t))\rangle_{\mathbb R}$ has a limit which is a real 2-dimensional subspace
 of $\mathbb C^J$.  Thus $\lim_{t\to 0} T_{\mathbf z(t)} V(H,\mathbf z(t))\supset \mathbb C^I$. 
 See  the proof of Theorem 20, \cite{OkaAf} for further detail.
 \end{proof}
 Now the proof of Theorem \ref{af-regularity} is completed.

\end{proof}
 By Proposition 11, \cite{OkaAf}, we get the transversality assertion:
\begin{Corollary} \label{nearby-transversality}Let $\{f,g\}$ be as in Theorem \ref{af-regularity}. Then there exists a positive number $r_0$ such that for any $r_1,\,0<r_1\le r_0$, there exists a positive number $\de(r_1)$ so that 
for any $\eta\ne 0$  with $|\eta|\le \de(r_1)$ and  and  any $\rho$, $r_1\le \rho\le r_0$, the nearby fiber $H\inv(\eta)$ is non-singular in $B_{r_0}^{2n}$ and intersects transversely with the sphere $S_{\rho}^{2n-1}$.
\end{Corollary}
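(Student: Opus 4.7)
The plan is to obtain this corollary as a direct consequence of Theorem \ref{af-regularity} combined with the isolatedness of the critical value (Lemma \ref{smoothness}), via the standard implication ``$a_f$-regularity $\Rightarrow$ transversality with small spheres'' recorded as Proposition 11 of \cite{OkaAf}. So the real work has already been done; what remains is to organize the choice of radii and $\delta(r_1)$ carefully.

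First, I would fix $r_0$ small enough to achieve two things simultaneously. By Lemma \ref{smoothness}, shrink $r_0$ so that $0$ is the unique critical value of $H$ on $B_{r_0}^{2n}$; this immediately gives the non-singularity of every nearby fiber $H\inv(\eta)$ with $\eta\ne 0$ inside $B_{r_0}^{2n}$. Further shrink $r_0$ if needed so that the finitely many strata of the canonical toric stratification $\mathcal S$ are smooth in $B_{r_0}^{2n}$ and each positive-dimensional stratum meets every sphere $S_\rho^{2n-1}$ ($0<\rho\le r_0$) transversely; this is possible by the standard local conic structure argument applied to each stratum.

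Second, given $r_1$ with $0<r_1\le r_0$, I would establish the existence of $\delta(r_1)>0$ by the usual contradiction with the Curve Selection Lemma. Suppose no such $\delta$ exists. Then there are sequences $\eta_k\to 0$, $\rho_k\in[r_1,r_0]$, and points $\bfz_k\in H\inv(\eta_k)\cap S_{\rho_k}^{2n-1}$ at which the fiber fails to be transverse to the sphere. After passing to a subsequence, $(\bfz_k,\rho_k)\to (\bfa,\rho_\infty)$ with $\bfa\in V(H)$, $r_1\le\|\bfa\|=\rho_\infty\le r_0$, and the tangent spaces $T_{\bfz_k}V(H,\bfz_k)$ converge to some limit $T$. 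By Theorem \ref{af-regularity} (Thom's $a_f$-regularity with respect to $\mathcal S$), $T$ contains $T_{\bfa}M$, where $M$ is the stratum of $\mathcal S$ through $\bfa$. On the other hand, tangency of fiber and sphere in the limit forces $T\subset T_{\bfa}S_{\rho_\infty}^{2n-1}$, hence $T_{\bfa}M\subset T_{\bfa}S_{\rho_\infty}^{2n-1}$. This contradicts the transverse intersection of $M$ with $S_{\rho_\infty}^{2n-1}$ guaranteed by the choice of $r_0$, finishing the argument.

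The main (and only) obstacle is the limit-of-tangent-spaces step, which is precisely what $a_f$-regularity is designed for; since Theorem \ref{af-regularity} already supplies this regularity, the corollary reduces to unpacking Proposition 11 of \cite{OkaAf}. No further case analysis on $(f,g)$ or on the position of $\bfa$ (cases (a)--(c) of the previous proof) is needed here, as all of them have been absorbed into the $a_f$-condition.
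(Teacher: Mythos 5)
Your proposal is correct and follows the paper's route exactly: the paper obtains this corollary by citing Proposition 11 of \cite{OkaAf} (the implication ``$a_f$-regularity $\Rightarrow$ transversality of nearby fibers with small spheres''), combined with Lemma~\ref{smoothness} and Theorem~\ref{af-regularity}. You have simply unpacked the proof of that cited proposition (compactness/limit-of-tangent-spaces plus the preliminary choice of $r_0$ so that the strata of $\mathcal S$ meet small spheres transversely), which is a faithful expansion of the paper's one-line justification.
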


\subsection{Existence of a tubular Milnor fibration}
By Lemma \ref{smoothness}, Theorem\ref{af-regularity} and Corollary \ref{nearby-transversality}, we apply Ehresmann's fibration theorem  ({\cite{Wolf1}) to obtain:
\begin{Theorem}\label{tubular-Milnor}
Assume that $\{f,g\}$ satisfies the same assumption as in Theorem \ref{af-regularity}.
Then there exists a  positive number $\eps$ and  a sufficiently small $\de\ll\eps$ such that 
\[
H=f\bar g:E(\eps;\de)^* \to D_\de^*
\]
is a locally trivial fibration where $E(\eps,\de)^*\,:=\{(\mathbf z)\,|\, 0\ne |H(\mathbf z)|\le \de,\,\|\mathbf z\|\le \eps\}$
and $D_\de^*:=\{\zeta\in \mathbb C\,|\, 0\ne |\zeta|\le \de\}$,
\end{Theorem}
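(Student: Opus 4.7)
The plan is to verify the hypotheses of Ehresmann's fibration theorem (for manifolds with corners) and let it do the work. First I would fix $\eps=r_0>0$ small enough that three things hold simultaneously on $\bar B_\eps^{2n}$: every positive-dimensional stratum of the canonical toric stratification $\mathcal S$ meets every sphere $S_\rho^{2n-1}$ with $0<\rho\le\eps$ transversally; $0$ is the unique critical value of $H$ (Lemma \ref{smoothness}); and $H$ satisfies $a_f$-regularity with respect to $\mathcal S$ (Theorem \ref{af-regularity}). Having fixed such $\eps$, I would then apply Corollary \ref{nearby-transversality} with $r_1=\eps$ to extract a positive $\de=\de(\eps)\ll\eps$ such that for every $\eta$ with $0<|\eta|\le\de$ the fiber $H\inv(\eta)$ is smooth inside $B_\eps^{2n}$ and meets $S_\eps^{2n-1}$ transversally.

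With these choices, $E(\eps,\de)^*$ is a compact subset of $\bar B_\eps^{2n}$ carrying a natural manifold-with-corners structure, whose boundary splits as $\partial_1\cup\partial_2$, where
\[
\partial_1=E(\eps,\de)^*\cap S_\eps^{2n-1},\qquad \partial_2=E(\eps,\de)^*\cap H\inv(\partial D_\de^*).
\]
The first properties to check are the basic ones required by Ehresmann. \emph{Properness:} since $E(\eps,\de)^*$ is closed in the compact set $\bar B_\eps^{2n}$ it is compact, so the preimage under $H$ of any compact subset of $D_\de^*$ is compact. \emph{Submersion on the interior:} by Lemma \ref{smoothness} there are no critical points of $H$ in $B_\eps^{2n}\setminus V(H)$, so $dH$ has full rank throughout the interior of $E(\eps,\de)^*$. \emph{Submersion on the wall $\partial_1$:} this is precisely Corollary \ref{nearby-transversality}, for the transverse intersection $H\inv(\eta)\pitchfork S_\eps^{2n-1}$ is equivalent to saying that $H$ restricted to $\partial_1$ has surjective differential at every point.

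Once properness together with submersivity on the interior \emph{and} on the distinguished wall $\partial_1$ are verified, Ehresmann's fibration theorem for manifolds with corners (\cite{Wolf1}) produces the desired local triviality: one constructs, locally on the base $D_\de^*$, a horizontal lift of the tangent bundle using a partition of unity of vector fields that are tangent to $\partial_1$ on $\partial_1$ and integrate to a trivialisation of $H$ over small disks in $D_\de^*$. The second boundary piece $\partial_2=H\inv(S_\de^1)$ maps to the circle $\partial D_\de^*$ and causes no obstruction, since any local trivialisation over a disk in $D_\de^*$ automatically respects the level set $|H|=\de$.

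The main obstacle has already been dealt with upstream: the non-triviality lay in proving $a_f$-regularity (Theorem \ref{af-regularity}), which in turn powered the transversality Corollary \ref{nearby-transversality}. Given those, the present theorem is essentially a packaging step, and the only care needed is to match the constants: choose $\eps$ first so that the stratified transversality and the uniqueness of the critical value both hold on $\bar B_\eps^{2n}$, and only then pull $\de$ small enough that the transversality of nearby fibers with the outer sphere $S_\eps^{2n-1}$ is guaranteed for every $\eta\in D_\de^*$.
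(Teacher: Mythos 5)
Your proof is correct and follows exactly the same route as the paper, which itself simply cites Lemma \ref{smoothness}, Theorem \ref{af-regularity}, Corollary \ref{nearby-transversality}, and Ehresmann's fibration theorem in a single sentence. You have merely unpacked the standard argument (compactness of $E(\eps,\de)^*$, submersivity on the interior and on the spherical wall, the vertical wall mapping to $\partial D_\de^*$) that the paper leaves implicit.
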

By Corollary \ref{nearby-transversality}, the fibration does not depend on the choice of $\eps$ and $\de$. 
For a disjoint locally tame non-degenerate pair $\{f,g\}$,  applying  the argument of  2 of  Example \ref{Ex5}, we have:
\begin{Corollary} Asssume that $\{f,g\}$ is a disjoint locally tame  non-degenerate complete intersection pair.  Fix an admissible toric modification $\hat \pi:X\to\mathbb C^n$.
Take $m>0$ large so that $\{f_m,g\}$  and $\{f,g_m\}$ satisfy the toric multiplicity condition with $\hat \pi$. Then changing the coefficients of $f_m$ or $g_m$ slightly if necessary, $\{f_m,g\}$  and $\{f,g_m\}$ are locally tame  non-degenerate complete intersections respectively
for which $\hat \pi:X\to\mathbb C^n$ satisfies the toric multiplicity condition.
 Thus $H_m:=f_m\bar g$ and $K:=f\bar g_m$  have  tubular Milnor fibrations.
\end{Corollary}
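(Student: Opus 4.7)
The plan is to reduce to Theorem \ref{tubular-Milnor} via case (ii) of Theorem \ref{af-regularity}, applied to each of the pairs $\{f_m,g\}$ and $\{f,g_m\}$ after a small coefficient perturbation if necessary. Lemma \ref{smoothness} will then supply the isolatedness of the critical value, Theorem \ref{af-regularity}(ii) will give $a_f$-regularity, Corollary \ref{nearby-transversality} will give the transversality of nearby fibers, and Theorem \ref{tubular-Milnor} will deliver the tubular Milnor fibrations of $H_m$ and $K$.

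First I would record the toric multiplicity condition. Since $d(P,f_m)=m\,d(P,f)$ and $\Gamma^*(f_m)=\Gamma^*(f)$ (so $\hat\pi$ remains admissible for $f_m g$), the condition $d(P,f_m)\ne d(P,g)$ at each $P\in\mathcal V^+$ becomes $m\,d(P,f)\ne d(P,g)$. As $\mathcal V^+$ is finite, only finitely many integers $m$ are forbidden; any sufficiently large $m$ outside this bad set works, and the argument for $\{f,g_m\}$ is symmetric. This is exactly the content of Example \ref{Ex5}.2.

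Next I would check that $f_m$ is locally tame non-degenerate and that $\{f_m,g\}$ is a disjoint non-degenerate complete intersection pair. Because $\varphi_m:\mathbb C^{*n}\to\mathbb C^{*n}$, $\mathbf z\mapsto(z_1^m,\dots,z_n^m)$, is a finite surjective homomorphism of tori, the face function $(f_m)_P(\mathbf z)=f_P(z_1^m,\dots,z_n^m)$ has no critical points on $\mathbb C^{*n}$ whenever $f_P$ does not, and the analogous pullback argument preserves local tameness on the torus $\mathbb C^{*J}$ attached to each vanishing coordinate subspace $\mathbb C^I$. Since $\mathcal V_{f_m}=\mathcal V_f$, the disjointness $\mathcal V_{f_m}\cap\mathcal V_g=\emptyset$ is inherited from $\mathcal V_f\cap\mathcal V_g=\emptyset$. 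The genuinely new condition is that, for each admissible weight vector $P$ with $I(P)\notin\mathcal V_{f_m}\cup\mathcal V_g$, the variety $\{(f_m)_P=g_P=0\}$ is a smooth complete intersection in the appropriate torus. This is a Zariski-open condition on the coefficients of $f_m$, so a small perturbation of those coefficients — which changes neither the Newton boundary, nor the weighted degrees $d(P,f_m)$, nor the set of vanishing coordinate subspaces, and therefore preserves everything established above — places us in the open set. The same argument applies symmetrically to $\{f,g_m\}$.

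With all hypotheses in hand, Theorem \ref{tubular-Milnor} produces the required tubular Milnor fibrations for $H_m=f_m\bar g$ and $K=f\bar g_m$. The main obstacle is the step just described: the non-degeneracy of the complete intersection cut out by $\{(f_m)_P=g_P=0\}$ is not automatic from the non-degeneracy of $\{f_P=g_P=0\}$ under the $\varphi_m$-substitution, because the substitution injects special relations among monomials of $f_m$. This is precisely why the statement permits a small coefficient perturbation; once allowed, the Zariski-open nature of the condition makes the step routine.
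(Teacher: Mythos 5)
Your proof is correct and follows essentially the route the paper intends: the paper supplies no explicit argument but simply refers to ``applying the argument of 2 of Example~\ref{Ex5}'', and you have unpacked exactly that — the equalities $d(P,f_m)=m\,d(P,f)$ together with the stability $\Ga^*(f_m)=\Ga^*(f)$ of the dual Newton diagram give the toric multiplicity condition for large $m$, the generic coefficient perturbation restores the non-degenerate complete intersection property (a Zariski-open condition involving only finitely many face functions), and then Lemma~\ref{smoothness}, Theorem~\ref{af-regularity}(ii), Corollary~\ref{nearby-transversality} and Theorem~\ref{tubular-Milnor} give the fibration. You also correctly identify the one genuinely non-automatic step — that the pullback $\varphi_m$ need not preserve the non-degeneracy of $\{f_P=g_P=0\}$ — which is precisely why the statement allows a coefficient perturbation.
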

For the definition $f_m$, see Example \ref{Ex5}.
\section{Spherical Milnor fibration}
In this section, we study the existence of the spherical Milnor fibration.
For a fixed small $r>0$, we consider the mapping
$\vphi:S_r^{2n-1}\setminus K\to S^1$ where $K=V(H)\cap S_r^{2n-1}$ and $\vphi(\mathbf z)=H(\mathbf z)/|H(\mathbf z)|$.
\begin{Lemma} [Lemma 10,\cite{fg-bar}]\label{spherical}We assume $\{f,g\}$ is  a  weak locally tame non-degenerate complete intersection pair 
satisfying the assumption in Theorem \ref{af-regularity}. 
Then there exists a positive number $r_3$ so that $\vphi: S_r^{2n-1}\setminus K\to S^1$ has no critical points 
 for  any $r,\,0<r\le r_3$.
\end{Lemma}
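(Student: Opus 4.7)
The plan is to argue by contradiction via the Curve Selection Lemma, in close parallel with the alternative proof of Lemma \ref{smoothness}. If the conclusion fails there is a sequence of critical points of $\varphi|_{S_r^{2n-1} \setminus K}$ accumulating at $\mathbf{0}$, and the Curve Selection Lemma (\cite{Milnor, Hamm1}) supplies a real-analytic arc $\mathbf{z}(t)$, $0 \le t \le 1$, with $\mathbf{z}(0) = \mathbf{0}$, $H(\mathbf{z}(t)) \ne 0$ for $t > 0$, and such that $\mathbf{z}(t)$ is a critical point of $\varphi$ on the sphere $S_{\|\mathbf{z}(t)\|}^{2n-1}$ for every $t > 0$.

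Since $H = f\bar g$ gives $\arg H = \arg f - \arg g$ and hence $\partial(\arg H) = \tfrac{1}{2i}(\partial f/f - \partial g/g)$, the critical-point criterion $d(\arg H)|_{TS_r} = 0$ translates into the purely holomorphic equation
\[
g(\mathbf{z}(t))\,\partial f(\mathbf{z}(t)) - f(\mathbf{z}(t))\,\partial g(\mathbf{z}(t)) \;=\; 2i\,\mu(t)\,f(\mathbf{z}(t))\,g(\mathbf{z}(t))\,\bar{\mathbf{z}}(t)
\]
for some real-analytic $\mu(t) \in \mathbb{R}$. Expanding $z_i(t) = \alpha_i t^{p_i} + (\text{higher terms})$ for $i \in K = \{i : z_i \not\equiv 0\}$ and setting $P = (p_i)_{i \in K}$, $I = \{i \in K : p_i = 0\}$, $J = K \setminus I$, $\mathbf{w} = (\alpha_i)_{i \in K}$ as in the proof of Theorem \ref{af-regularity}, I pair the vector equation bilinearly with $(p_j z_j(t))_{j \in K}$ and use the identity $\sum_j p_j z_j(t)\,\partial_j \phi(\mathbf{z}(t)) = t\,\tfrac{d}{dt}\phi(\mathbf{z}(t))$ valid for any holomorphic $\phi$ along the Puiseux curve. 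The dotted equation collapses to the scalar master equation
\[
t\,\frac{d}{dt}\log\!\left(\frac{f(\mathbf{z}(t))}{g(\mathbf{z}(t))}\right) \;=\; 2i\,\mu(t) \sum_{j \in J} p_j\,|z_j(t)|^2.
\]

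Writing $\ell_f := \mathrm{ord}_t f(\mathbf{z}(t))$, $\ell_g := \mathrm{ord}_t g(\mathbf{z}(t))$, and $p_{\min} := \min_{j \in J} p_j > 0$, the left-hand side expands as $(\ell_f - \ell_g) + O(t)$ (a real rational), while the right-hand side expands as $2i\mu_0 C'\,t^{\mathrm{ord}(\mu) + 2 p_{\min}} + \cdots$ with $C' = \sum_{j : p_j = p_{\min}} p_j |\alpha_j|^2 > 0$ (a purely imaginary leading coefficient). A real rational equalling a purely imaginary number forces $\ell_f = \ell_g$ in every scenario: either the orders disagree and the dominant side vanishes identically, or they agree and one equates a nonzero real with a nonzero purely imaginary number. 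In the generic case (c-3), where $f_P^K(\mathbf{w}),\,g_P^K(\mathbf{w}) \ne 0$, we have $\ell_f = d(P;f)$ and $\ell_g = d(P;g)$, so $\ell_f = \ell_g$ directly contradicts the tame Newton multiplicity condition $(\sharp)$, settling this case.

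In the remaining cases --- (a-2), the (b)-cases, and the subcases (c-1), (c-2) where one of $f_P^K,\,g_P^K$ vanishes at $\mathbf{w}$ --- I return to the component-wise equation $g\partial_j f - f\partial_j g = 2i\mu f g\, \bar z_j$ and exploit local tame non-degeneracy exactly as in the correspondingly-labelled cases of the proof of Theorem \ref{af-regularity}. By non-degeneracy of the pertinent face function(s), at least one of $\partial_j f_P^K(\mathbf{w}),\,\partial_j g_P^K(\mathbf{w})$ is nonzero, which pins down the leading $t$-order of $V_j := g\partial_j f - f\partial_j g$; matching this with $\mathrm{ord}_t(2i\mu f g \bar z_j) = \mathrm{ord}(\mu) + \ell_f + \ell_g + p_j$ across the several relevant indices $j$, and combining with the already-established equality $\ell_f = \ell_g$, produces an overdetermined system that is inconsistent with the gradient of the face function at $\mathbf{w}$ being nonzero. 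The main obstacle is precisely this combinatorial bookkeeping across the various vanishing-subspace subcases, which mirrors the same delicacy encountered in case (c) of the proof of Theorem \ref{af-regularity}.
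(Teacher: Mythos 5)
Your strategy — Curve Selection Lemma, reformulation of the critical-point condition as a holomorphic equation, then an order analysis pivoting on the Euler relation and the multiplicity condition — is the same toolbox the paper deploys, but you execute it through a different mechanism. The paper does not prove Lemma~\ref{spherical} in isolation; it is obtained as a byproduct of the proof of Lemma~\ref{positive}, where the relation $\mathbf z(t)=\la(t)\mathbf v_1(\mathbf z(t))+\mu(t)\mathbf v_2(\mathbf z(t))$ is analyzed and the conclusion $\la_0>0$ is extracted (the critical-point case is precisely $\la\equiv 0$, i.e.\ $\nu_1=\infty$ and $\de_1=0$, which the end of that proof forbids). Your reduction to $g\,\partial f-f\,\partial g=2i\mu fg\,\bar{\mathbf z}$ and the pairing with $(p_jz_j)_j$ is an attractive alternative route, but as written it contains a genuine gap.

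The pivotal step is the claimed identity $\sum_j p_jz_j(t)\,\partial_j\phi(\mathbf z(t))=t\,\frac{d}{dt}\phi(\mathbf z(t))$. This is an exact identity only for a monomial curve $z_j(t)=\al_jt^{p_j}$; the actual curve from the Curve Selection Lemma has higher-order terms $z_j(t)=\al_jt^{p_j}+\be_jt^{q_j}+\cdots$, and then $t\frac{dz_j}{dt}-p_jz_j$ has order $q_j>p_j$, so the two sides differ by terms whose order is only guaranteed to exceed $d(P;f)$, not $\ord_t f(\mathbf z(t))$. Consequently, when $f_P(\mathbf w)=0$ (so that $\ord_tf(\mathbf z(t))>d(P;f)$), the quantity $\sum_jp_jz_j\partial_j f\big/f(\mathbf z(t))$ need not tend to $\ell_f$ at all — it can blow up. (A two-variable toy model makes this concrete: for $f=z_1-z_2+z_1z_2$, $P=(1,1)$, $z(t)=(t,t+t^2)$, one finds $f(z(t))=t^3$ while $\sum p_jz_j\partial_jf=t^2+2t^3$, so the ratio is $t^{-1}+2$.) Hence your "master scalar equation'' does not force $\ell_f=\ell_g$ in the cases where a face function vanishes at $\mathbf w$, and the final paragraph — where you defer all of (a-2), the (b)-cases, (c-1), (c-2) to "an overdetermined system that is inconsistent with non-degeneracy'' — is precisely the part that would need to be carried out, and in fact is the part that contains the real content. (Note also that since the curve tends to $\mathbf 0$, one has $I=\emptyset$ and $J=K$ automatically, so the (a)/(b) dichotomy of Theorem~\ref{af-regularity} does not actually arise here; the relevant case split is on the vanishing of $f_P^K(\mathbf w)$ and $g_P^K(\mathbf w)$.) The paper's proof of Lemma~\ref{positive} handles those cases through the indicator functions $\eps_f,\eps_g$, the quantity $\ell=\min\{d(P;f)-m_f,\,d(P;g)-m_g\}$, and the vanishing system $\la_0\de_1w_1^j+\mu_0\de_2w_2^j=0$, whose contradiction with the non-degeneracy of $V(f)$, $V(g)$, or $V(f,g)$ is what finishes the non-generic subcases; you would need something of comparable substance, and your current text neither supplies nor substitutes for it. The generic case $f_P^K(\mathbf w)\ne 0\ne g_P^K(\mathbf w)$ you handle correctly (modulo restating the pairing so that only the leading order of the identity is invoked), and the appeal to the tame Newton multiplicity condition there is sound.
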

In the proof of Lemma \ref{positive} below, we will simultaneously reprove Lemma \ref{spherical}.
Using  Lemma\ref{spherical} and the transversality property of the  fibers  $H\inv(\eta),\ 0\ne |\eta|\le \de$ and the sphere $S_r^{2n-1}$ (Corollary \ref{nearby-transversality}), we obtain the following.
\begin{Theorem}\label{SphericalMilnor}Assume $\{f,g\}$ is  a weak  locally tame non-degenerate complete intersection pair  as in Theorem \ref{af-regularity}. For a sufficiently small $r$,
$\vphi:S_r^{2n-1}\setminus K\to S^1$ is a locally trivial fibration.
\end{Theorem}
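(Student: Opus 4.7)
The plan is to combine Lemma \ref{spherical} (which says $\varphi$ is a submersion on $S_r^{2n-1}\setminus K$) with Corollary \ref{nearby-transversality} (which gives transversality of the nearby fibers $H\inv(\eta)$ with $S_r^{2n-1}$) to produce the local triviality. The obstruction to applying Ehresmann's fibration theorem directly is that $S_r^{2n-1}\setminus K$ is non-compact; so we split it into a compact piece away from $K$ and a collar neighborhood of $K$, and match fibrations on the two pieces.

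\textbf{Step 1 (setup).} Choose $r$ so small that Lemma \ref{smoothness}, Theorem \ref{af-regularity}, Corollary \ref{nearby-transversality}, and Lemma \ref{spherical} all apply, with associated $\de=\de(r)>0$. Decompose
\[
S_r^{2n-1}\setminus K \;=\; A \cup B, \qquad A:= S_r^{2n-1}\cap\{|H|\ge \de\},\;\; B:=S_r^{2n-1}\cap \{0<|H|<\de\}.
\]
Here $A$ is a compact manifold with boundary $\partial A = S_r^{2n-1}\cap\{|H|=\de\}$.

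\textbf{Step 2 (fibration on $A$).} By Lemma \ref{spherical}, $\varphi|_A : A \to S^1$ is a submersion between manifolds (with boundary), and $A$ is compact. Ehresmann's theorem (in the manifold-with-boundary version; cf.\ \cite{Wolf1}) yields that $\varphi|_A$ is a locally trivial fibration, and moreover that the restriction $\varphi|_{\partial A}$ is a locally trivial fibration as well.

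\textbf{Step 3 (fibration on $B$).} By Corollary \ref{nearby-transversality}, for every $\eta\in \overline{D_\de^*}$ the fiber $H\inv(\eta)$ meets $S_\rho^{2n-1}$ transversally for all $\rho$ in a neighborhood of $r$. Thus the map
\[
\Psi \;:\; B \;\longrightarrow\; \partial A \times (0,\de], \qquad \mathbf z \;\longmapsto\; (\mathbf z', |H(\mathbf z)|),
\]
where $\mathbf z'$ is the endpoint in $\partial A$ of the integral curve through $\mathbf z$ of a vector field tangent to $S_r^{2n-1}$, tangent to the level sets of $\arg H$, and pointing in the direction of increasing $|H|$, is a diffeomorphism onto $\partial A\times(0,\de]$. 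Such a vector field exists on $B$ because $\varphi$ is a submersion (Lemma \ref{spherical}) and, by transversality with $S_r^{2n-1}$, the fibers of $(|H|,\arg H)$ restricted to $S_r^{2n-1}$ are transverse complementary to the direction of $\grad|H|$ restricted to $S_r^{2n-1}$. By construction $\Psi$ intertwines $\varphi$ with $\varphi|_{\partial A}\circ\mathrm{pr}_1$, so $\varphi|_B$ is equivalent to the pull-back of $\varphi|_{\partial A}$, hence is locally trivial.

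\textbf{Step 4 (matching and conclusion).} The two local trivializations agree on the common boundary $\partial A$, so they glue to a locally trivial fibration $\varphi : S_r^{2n-1}\setminus K \to S^1$. Independence of $r$ (for small $r$) follows as usual from Corollary \ref{nearby-transversality} and Lemma \ref{spherical}: the radial homotheties carry the data from one radius to another, a standard argument we will not repeat.

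\textbf{Main obstacle.} The essential technical point is Step 3: we must show that near $K$ one can construct a flow that is simultaneously tangent to $S_r^{2n-1}$, tangent to the fibers of $\varphi$, and transverse to the level tubes $\{|H|=c\}$. Tangency to $S_r^{2n-1}$ comes for free since we work on $B\subset S_r^{2n-1}$; the decisive input is the transversality of $H\inv(\eta)$ with $S_r^{2n-1}$ from Corollary \ref{nearby-transversality}, which ensures that the differential $d(|H|,\arg H)|_{TS_r}$ has full rank on $B$, so that the three conditions can be imposed simultaneously and the resulting vector field has no zeros. This is the only non-routine step; the rest is the standard Milnor--L\^e--Oka gluing.
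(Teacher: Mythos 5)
Your proof is correct and rests on the same two ingredients as the paper's: Lemma \ref{spherical} (no critical points of $\vphi$ on the sphere) and Corollary \ref{nearby-transversality} (transversality of nearby fibers, equivalently linear independence of $\mathbf z, \mathbf v_1, \mathbf v_2$ on the tube $|H|\le\de$). The organization differs, though: the paper builds a single global vector field $\mathcal V$ on $S_r^{2n-1}\setminus K$, tangent to the sphere, satisfying $\Re(\mathcal V,\mathbf v_2)=1$ everywhere (so $\arg H$ increases at unit speed) and additionally $\Re(\mathcal V,\mathbf v_1)=0$ on $N(K)$ (so $|H|$ is constant near $K$); completeness of the flow then gives the trivialization of $\vphi$ directly, with no gluing argument. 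You instead cut $S_r^{2n-1}\setminus K$ into a compact piece $A=\{|H|\ge\de\}$, where Ehresmann applies, and a collar $B$ near $K$ given a product structure $\partial A\times(0,\de]$ by flowing along a field that preserves $\arg H$ and increases $|H|$. These are genuinely complementary constructions (the paper flows transverse to the $\vphi$-fibers, you flow inside them on the collar), but both hinge on exactly the transversality input from Corollary \ref{nearby-transversality}, and the work is comparable. One small fix: in your Step 2 the application of Ehresmann on the manifold-with-boundary $A$ needs $\vphi|_{\partial A}$ to be a submersion, and that does \emph{not} follow from Lemma \ref{spherical} alone — $\ker d\vphi|_{TS_r}$ could coincide with $T\partial A$. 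You must again invoke the linear independence of $\{\mathbf z,\mathbf v_1,\mathbf v_2\}$ on $\{|H|=\de\}$, i.e.\ Corollary \ref{nearby-transversality}, already at this step, not only in Step 3.
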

\begin{proof} Consider the neighborhood of $K$ defined by 
$N(K):=\{\mathbf z\in S_r^{2n-1}\setminus K\,|\, |H(\mathbf z)|\le \de\}$.
Corollary \ref{nearby-transversality} says that  three vectors $\mathbf z,\mathbf v_1(\mathbf z),\mathbf v_2(\mathbf z)$ are linearly independent over $\mathbb R$ on $ N(K)$. For the definition of $\mathbf v_1,\mathbf v_2$, see the next section. 
Construct a vector filed $\mathcal V$ on $S_r^{2n-1}\setminus K$ such that 
$\Re(\mathcal V(\mathbf z),\mathbf v_2(\mathbf z))=1$ and furthermore if $\mathbf z\in N(K)$, it also satisfies $\Re(\mathcal V(\mathbf z),\mathbf  v_1(\mathbf z))=0$.
Then along the integral curves of $\mathcal V$, the argument of $H(\mathbf z)$ is monotonic  increase and   the absolute value of $H$ is constant when it enters  in 
the neighborhood $N(K)$.
Thus the integral curves exists for any time interval. For the local triviality, we use the integration of $\mathcal V$.
\end{proof}
\subsection{Equivalence of tubular  and spherical Milnor fibrations}
In this section, we consider the equivalence problem of two Milnor fibrations.
Let us recall  two vector fields on the complement of $V(H)$ which are defined as follows (\cite{OkaMix}).

\begin{eqnarray*}
\mathbf v_1&=&\overline{\partial \log H}+\bar\partial \log H
= \frac{\overline{\partial f}}{\bar f}+ \frac{\overline{\partial g}}{\bar g}\\
\mathbf v_2&=& i(\overline{\partial\log H}-\bar\partial \log H)
                    =i \left( \frac{\overline{\partial f}}{\bar f}- \frac{\overline{\partial g}}{\bar g}   \right).
\end{eqnarray*}
$\mathbf v_1,\mathbf v_2$ are real orthogonal.
Let $\mathbf z(t)$ be a real analytic curve in $\mathbb C^n\setminus V(H)$. Then we have
\begin{eqnarray*}
&&\frac d{dt}\log H(\mathbf z(t))\\
&&=\frac{1}{f(\mathbf z(t))} \sum_{i=1}^n \frac{\partial f}{\partial z_i}(\mathbf z(t))\frac{dz_i(t)}{dt}+
\frac{1}{\bar g(\mathbf z(t))}\sum_{i=1}^n\overline{ \frac{\partial g}{\partial z_i}(\mathbf z(t))}\overline{\frac{dz_i(t)}{dt}}\\
&&=\frac 12 \left(\frac{d\mathbf z(t)}{dt}, \mathbf v_1(\mathbf z(t))-i\mathbf v_2(\mathbf z(t)) \right)+\frac 12\overline{ \left(\frac{d\mathbf z(t)}{dt}, \mathbf v_1(\mathbf z(t))+i \mathbf v_2 (\mathbf z(t))\right)}\\
&&=\Re\left (\frac{d\mathbf z(t)}{dt}, \mathbf v_1(\mathbf z(t))\right)+i\Re\left (\frac{d\mathbf z(t)}{dt}, \mathbf v_2(\mathbf z(t))\right).
\end{eqnarray*}
Thus we $\mathbf v_1(\mathbf z)$ and $\mathbf v_2(\mathbf z)$ are gradient vectors of $\Re\log H(\mathbf z)=\log |H(\mathbf z)|$ and $\Im\log H(\mathbf z)=i\arg H(\mathbf z)$. They are defined on $\mathbb C^n\setminus V(H)$.
A key lemma is the following.
\begin{Lemma}\label{positive}
Assume that $\{f,g\}$  is as in Theorem \ref{af-regularity}.
There exists a positive number $r_0$ such that  for any  $\mathbf z\in B_{r_0}^{2n}\setminus V(H)$,
either three vectors 
$\mathbf z,\mathbf v_1(\mathbf z),\mathbf v_2(\mathbf z)$ are linearly independent over $\mathbb R$ or they are linearly dependent and 
  the relation takes the following form:
\[
\mathbf z=\lambda \mathbf v_1(\mathbf z)+\mu \mathbf v_2(\mathbf z),\,\lambda,\mu\in \mathbb R,
\]
where 
$\lambda$ is positive.
\end{Lemma}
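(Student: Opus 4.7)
The plan is to argue by contradiction via the Curve Selection Lemma, following the template of the proof of Theorem \ref{af-regularity} and Proposition 1 of \cite{OkaPolar}. Suppose the conclusion fails. Applying the Curve Selection Lemma to the real semi-analytic set
\[
\{\mathbf z\in B_{r_0}^{2n}\setminus V(H)\mid \mathbf z=\lambda\mathbf v_1(\mathbf z)+\mu\mathbf v_2(\mathbf z)\text{ for some }\lambda\le 0,\ \mu\in\mathbb R\}
\]
produces a real-analytic path $\mathbf z(t)$, $t\in[0,1]$, with $\mathbf z(0)=\mathbf 0$, $\mathbf z(t)\notin V(H)$ for $t>0$, and analytic $\lambda(t)\le 0$, $\mu(t)$ satisfying the linear-dependence relation along the path. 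After relabelling coordinates so that $K=\{j:z_j(t)\not\equiv 0\}=\{1,\dots,m\}$, expand $z_j(t)=\alpha_j t^{p_j}+(\text{higher})$ with $\alpha_j\ne 0$, $p_j>0$; set $P=(p_1,\dots,p_m)$ and $\mathbf w=(\alpha_1,\dots,\alpha_m)\in\mathbb C^{*K}$. Since $f(\mathbf z(t))\ne 0\ne g(\mathbf z(t))$ for $t>0$, $\mathbb C^K$ is not a vanishing coordinate subspace of either $f$ or $g$.

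Using the identities $\mathbf v_1+i\mathbf v_2=2\,\overline{\partial g}/\bar g$ and $\mathbf v_1-i\mathbf v_2=2\,\overline{\partial f}/\bar f$, the relation rewrites as
\[
\mathbf z(t)\;=\;\gamma(t)\,\frac{\overline{\partial f}(\mathbf z(t))}{\overline{f(\mathbf z(t))}}\;+\;\overline{\gamma(t)}\,\frac{\overline{\partial g}(\mathbf z(t))}{\overline{g(\mathbf z(t))}},\qquad \gamma=\lambda+i\mu.
\]
Taking complex conjugates, multiplying the $j$-th coordinate by $p_j z_j(t)$, and summing yields the fundamental identity
\[
\sum_{j=1}^{m}p_j|z_j(t)|^2\;=\;\overline{\gamma(t)}\,\frac{\sum_j p_j z_j\,\partial_j f(\mathbf z(t))}{f(\mathbf z(t))}\;+\;\gamma(t)\,\frac{\sum_j p_j z_j\,\partial_j g(\mathbf z(t))}{g(\mathbf z(t))}.
\]
In the generic subcase $f_P(\mathbf w)\ne 0$, $g_P(\mathbf w)\ne 0$, the Euler relation $\sum p_j z_j\,\partial_j f_P=d(P;f)f_P$ (and its counterpart for $g_P$) gives, at leading order, $\sum p_j z_j\,\partial_j f(\mathbf z(t))/f(\mathbf z(t))=d(P;f)+o(1)$ and similarly for $g$. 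Taking real parts,
\[
0\;<\;\sum_{j=1}^{m}p_j|z_j(t)|^2\;=\;\lambda(t)\bigl(d(P;f)+d(P;g)\bigr)+o(|\gamma(t)|),
\]
and, since $d(P;f)+d(P;g)>0$, the strict positivity of the left-hand side against the leading-order behaviour of the right-hand side forces $\lambda(t)>0$ for all small $t>0$, contradicting $\lambda(t)\le 0$.

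The remaining subcases---where $f_P(\mathbf w)=0$ and/or $g_P(\mathbf w)=0$---are treated by the same second-leading-order gradient replacement used in cases (c-1)--(c-3) of Theorem \ref{af-regularity}: one iteratively substitutes $\overline{\partial g}\leftarrow \overline{\partial g}-\rho(t)\overline{\partial f}$ (with $\rho(t)$ a monomial chosen to raise the leading index) to expose two genuinely linearly-independent leading gradient directions in $\mathbb C^K$, and then reruns the weighted Euler computation on the resulting non-cancelling leading terms. In the "(c-3)" analogue, the tame Newton multiplicity condition $d(P;f)\ne d(P;g)$ in case (i), or the disjointness of vanishing subspaces in case (ii), is precisely what prevents the real part of the right-hand side from collapsing. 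Simultaneously, specialising to $\lambda(t)\equiv 0$ (so $\gamma=i\mu$) in the same identity gives $0<\sum p_j|z_j|^2=i\mu(t)(d(P;g)-d(P;f))+o(|\mu|)$, whose leading right-hand side is purely imaginary---impossible---thereby reproving Lemma~\ref{spherical} in parallel. \textbf{The main obstacle} is the doubly degenerate case $f_P(\mathbf w)=g_P(\mathbf w)=0$, where the Euler shortcut fails and one must carry the finite gradient-replacement procedure far enough to extract a leading-order sign for $\lambda$; verifying that the positivity of the left-hand side of the weighted identity survives all the replacements, and that the tame Newton multiplicity hypothesis still rules out the final pathological proportionality, is the technical crux.
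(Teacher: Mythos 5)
Your high-level strategy coincides with the paper's: Curve Selection Lemma, expansion in powers of $t$, the rewriting of $\mathbf z=\lambda\mathbf v_1+\mu\mathbf v_2$ via $\gamma=\lambda+i\mu$ into a relation involving $\overline{\partial f}/\bar f$ and $\overline{\partial g}/\bar g$, and the weighted Euler identity to extract the sign of $\lambda$. Your fundamental identity obtained by conjugating, multiplying by $p_jz_j$ and summing is correct and is essentially the paper's comparison equality (the paper pairs with $\dfrac{dz_j}{dt}$ rather than $p_jz_j/t$, but these agree to leading order). Your generic case $f_P(\mathbf w)\ne 0$, $g_P(\mathbf w)\ne 0$, including the extraction of $\lambda>0$ from the real part and the parallel reproof of Lemma~\ref{spherical} from the imaginary part via the tame Newton multiplicity condition, is the same as the paper's and is fine.

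The degenerate case, which you flag as the technical crux, is a genuine gap and is also the wrong tool. The gradient-replacement procedure from the proof of Theorem~\ref{af-regularity} is designed to expose the correct limit of a \emph{two-dimensional subspace} (so that one can identify $\lim T_{\mathbf z(t)}V(H,\mathbf z(t))$); it preserves the span $\langle\overline{\partial f},\overline{\partial g}\rangle_{\mathbb C}$, not any Euler-type scalar identity, and it is not compatible with the positivity-of-the-real-part argument you need here. What the paper actually does is simpler and does not need any replacement: set $\ell:=\min\{d(P;f)-m_f,\ d(P;g)-m_g\}\le 0$ where $m_f=\ord\,f(\mathbf z(t))$, $m_g=\ord\,g(\mathbf z(t))$; the degenerate case is $\ell<0$. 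Writing out the leading terms of the relation $\mathbf z(t)=\lambda(t)\mathbf v_1+\mu(t)\mathbf v_2$ component-wise shows that if $\ell<0$ then a nontrivial linear combination of the gradient vectors of $f_P$ and/or $g_P$ at $\mathbf w$ must vanish, directly contradicting the non-degeneracy hypothesis on $V(f)$, $V(g)$, or the complete intersection $V(f,g)$ (according to which of $\eps_f,\eps_g$ is nonzero). So $\ell=0$ is forced, i.e.\ $f_P(\mathbf w)\ne 0$ and $g_P(\mathbf w)\ne 0$, and only your generic computation remains. You should replace the gradient-replacement plan with this direct non-degeneracy contradiction; as stated, the proposal does not close the degenerate case.
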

\begin{proof}
Assume that there exists a real analytic curve $\mathbf z(t)$ in $\mathbb C^n\setminus V(H)$ and real valued rational functions
$\lambda(t),\mu(t)$ such that 
\begin{eqnarray}\label{eq1}
\mathbf z(t)=\lambda(t) \mathbf v_1(\mathbf z(t))+\mu(t) \mathbf v_2(\mathbf z(t))
\end{eqnarray}
and $\mathbf z(0)=\mathbf 0$.  
 If $\mu(t)\equiv 0$, the assertion follows from Corollary 3.4, \cite{Milnor}. Thus we may assume that $\mu(t)\not \equiv 0$. 
Let $I=\{j\,|\, z_j(t)\not\equiv 0\}$. As $f^I,g^I$ satisfies the same assumption, we assume for simplicity that $I=\{1,\dots,n\}$.
Thus $\mathbf z(t)\in \mathbb C^{*n}$ for $t\ne 0$. Consider their Taylor or Laurent expansions
\begin{eqnarray*}
f(\mathbf z(t))&=&\ga t^{m_f}+\text{(higher terms)},\, \ga\in \mathbb C^*\\
g(\mathbf z(t))&=&\be t^{m_g}+\text{(higher terms)},\, \be\in \mathbb C^*\\
z_i(t)&=&a_it^{p_i}+\text{(higher terms)},\, a_i\in \mathbb C^*\\
\la(t)&=&\la_0t^{\nu_1}+\text{(higher terms)},\, \,\la_0\in \mathbb R\\
\mu(t)&=&\mu_0t^{\nu_2}+\text{(higher terms)},\, \mu_0\in \mathbb R^*.
\end{eqnarray*}
In the proof, we reprove Lemma \ref{spherical}. Thus $\la_0=0$ only if $\la(t)\equiv 0$ and in  that case, we understand $\nu_1=+\infty$. 
If this is the case, $\mathbf z(t)$ is a critical point of $\vphi: S_\tau^{2n-1}\setminus K_\tau$ where $\tau=\|\mathbf z(t)\|$ and $K_\tau$ is the link of $H\inv(0)$ in this sphere.
Put  $\ell:=\min\{d(f;P)-m_f,\,d(P;g)-m_g\}$ and let us define
\begin{eqnarray*}
&&\eps_f= \begin{cases} &1,\,\text{if}\, \,d(P;f)-m_f=\ell\\
                                      &0,\, \text{if}\,\,d(P;f)-m_f> \ell
                                      \end{cases}\\
  & &\eps_g=\begin{cases} &1,\,\text{if}\,\,d(P;g)-m_g=\ell\\
                                    &0,\, \text{if}\,\,d(P;g)-m_g> \ell\\         
                                   \end{cases}                    
\end{eqnarray*}
As $\ord\,f(\mathbf z(t))\ge d(P;f),\,\ord\,g(\mathbf z(t))\ge d(P;g)$, 
we have  that $\ell\le 0$.
Put $\mathbf v_k(\mathbf z(t))=(v_k^1(t),\dots, v_k^n(t))$ for $k=1,2$.
 Observe that 
\begin{eqnarray}
&v_1^j(t)=\left(\overline{\dfrac{f_{P,j}(\mathbf a)}{\bar{\ga}}}\eps_f+\overline{\dfrac{g_{P,i}(\mathbf a)}{\bar{\be}}}\eps_g
\right)t^{\ell-p_j}+\dots\label{eq2}\\ 
&v_2^j(t)= i\left(\overline{\dfrac{f_{P,j}(\mathbf a)}{\bar{\ga}}}\eps_f-\overline{\dfrac{g_{P,j}(\mathbf a)}{\bar{\be}}}\eps_g
\right)t^{\ell-p_j}+\dots\label{eq3}
\end{eqnarray}
Put 
\[\begin{split}
&P(\mathbf a):=(p_1a_1,\dots,p_na_n),\,\,
p_{min}=\min\{p_j\,|\, j\in I\},\\
&J=\{j\,|\, p_j=p_{min}\}, \,\nu_0=\min\{\nu_1,\nu_2\}
\end{split}
\]
 and put $\de_i=1$ or $0$ according to $\nu_i=\nu_0$ or $\nu_i>\nu_0$ respectively for $i=1,2$.
By (\ref{eq1}), we get
\begin{eqnarray*}\label{assumption}
a_jt^{p_j}&+&\dots\\
&=&\la_0\left(\overline{\frac{f_{P,j}(\mathbf a)}{\bar{\ga}}}\eps_f+\overline{\frac{g_{P,i}(\mathbf a)}{\bar{\be}}}\eps_g
\right)t^{\nu_1+\ell-p_j}+\dots\\
&+&\mu_0 i\left(\overline{\frac{f_{P,j}(\mathbf a)}{\bar{\ga}}}\eps_f-\overline{\frac{g_{P,j}(\mathbf a)}{\bar{\be}}}\eps_g
\right)t^{\nu_2+\ell-p_j}+\dots\\
&=&e_jt^{\nu_0+\ell-p_j}
+\dots
\end{eqnarray*}
where 
\[e_j
=\left\{\la_0\de_1\left(\overline{\frac{f_{P,j}(\mathbf a)}{\bar{\ga}}}\eps_f+\overline{\frac{g_{P,i}(\mathbf a)}{\bar{\be}}}\eps_g
\right)+
\mu_0\de_2  i\left(\overline{\frac{f_{P,j}(\mathbf a)}{\bar{\ga}}}\eps_f-\overline{\frac{g_{P,j}(\mathbf a)}{\bar{\be}}}\eps_g
\right)
\right\}.
\]

If $\nu_0+\ell-2p_{min}>0$, we get a contradiction  $a_j=0,\, j\in J$. Thus $\nu_0+\ell-2p_{min}\le 0$.
Consider the vectors
\begin{eqnarray*}
\mathbf v_1^{(0)}&=&(w_1^{1},\dots, w_1^n),\,\,
 w_1^{j}=\overline{\frac{f_{P,j}(\mathbf a)}{\overline{\ga}}}\eps_f+\overline{\frac{g_{P,j}(\mathbf a)}{\overline{\be}}}\eps_g\\
 \mathbf v_2^{(0)}&=&(w_2^1,\dots, w_2^n),\,\,
 w_2^{j}=i\left( \overline{\frac{f_{P,j}(\mathbf a)}{\overline{\ga}}}\eps_f-\overline{\frac{g_{P,j}(\mathbf a)}{  \overline{\be}}} \eps_g\right)
\end{eqnarray*}
Assume that $\nu_0+\ell-2p_{min}<0$. 
By (\ref{assumption}), we get
\begin{eqnarray}\label{vanish}
\la_0\de_1w_1^j+\mu_0\de_2w_2^j=0,\,\,j=1,\dots,n.
\end{eqnarray}
If $\ell<0$, $\eps_f f_P(\mathbf a)=0$  and $\eps_g g_P(\mathbf a)=0$.
The above equality gives a contradiction to the non-degeneracy condition either  for 
$V(f)$ if $\eps_f=1,\eps_g=0$, or  
for  $V(g)$ if $\eps_f=0,\eps_g=1$  or 
for the intersection variety $V(f,g)$ if $\eps_f=\eps_g=1$.

Assume $\ell=0$. Then  $\nu_0<2p_{min}$, $\ga=f_P(\mathbf a)$ and $\be=g_P(\mathbf a)$.
We consider the equality
\begin{eqnarray}\label{comparison0}
\sum_{j\in J}\frac{dz_j(t)}{dt}z_j(t)=\sum_{j\in J}\frac{dz_j(t)}{dt}\left({\overline{\la(t)}\overline{v_1^{j}}(t)+\overline{\mu(t)}\overline {v_2^{j}}(t)}\right).
\end{eqnarray}
The left hand side has order $2p_{min}-1$ as
\[
\sum_{j\in J}\frac{z_j(t)}{dt}z_j(t)=\sum_{j\in J}^n p_j|a_j|^2 t^{2p_{min}-1}+\dots.
\]
Using (\ref{vanish}) and Euler equality, we see that the leading term of the right hand is $t^{\nu_0-1}$  which has the coefficient
\[\la_0\de_1(d(P;f)+d(P;g))+i\mu_0\de_2(d(P;f)-d(P;g))\ne 0.
\]
The coefficient is non-zero.
( If $\de_1=0$, we use the Newton multiplicity condition to see the imaginary part is non-zero.) Thus the order is strictly smaller than $2p_{min}-1$, which is a contradiction.
Thus the case  $\nu_0+\ell-2p_{min}<0$ does not occur.
Thus the following  equality holds:
\[
\nu_0+\ell-2p_{min}=0.
\]
(\ref{assumption}) implies the following equality.
\begin{eqnarray} \label{comparison}
\la_0\de_1 w_1^j+\mu_0\de_2 w_2^j
&=&\begin{cases}
a_j\, \, &j\in J\\
0,\,\,&j\notin J.
\end{cases}\notag
\end{eqnarray}
 We consider the equality (\ref{comparison0}) again.

The left side of (\ref{assumption}) has order $2p_{min}-1$ with the coefficient $\sum_{j\in J}p_{min}|a_j|^2>0$.
The right side has order $2p_{min-1}$ and the coefficient is given through Euler equality  as
\begin{multline*}
\la_0\de_1
\left( \frac{d(P;f)f_P(\mathbf a)}{\ga}\eps_f+\frac{d(P;g)g_P(\mathbf a)}{\be}\eps_g\right)\\
+\mu_0\de_2 i
\left(\frac{d(P;f)f_P(\mathbf a)}{\ga}\eps_f-\frac{d(P;g)g_P(\mathbf a)}{\be}\eps_g\right)
\end{multline*}
If $\ell<0$, $f_P(\mathbf a)\eps_f=g_P(\mathbf a)\eps_g=0$ and the above coefficient is zero. Thus we get a contradiction.
Thus the only possible case is $\ell=0$ and therefore
\[f_P(\mathbf a),\,g_P(\mathbf a)\ne 0,\, \nu_0=2p_{min}.
\]
We observe also $\de_1\ne 0$, as otherwise the coefficient is purely imaginary.
Thus we should have 
\[\ell=0, \,\nu_1\le \nu_2,\,\ga=f_P(\mathbf a),\, \be=g_P(\mathbf a).
\]
The leading coefficients of (\ref{comparison0}) gives the equality:
\begin{multline*}
\sum_{j\in J}p_{min}|a_j|^2
=\la_0\left (d(P;f)+d(P;g)\right )+i\de_2\mu_0\left(d(P;f)-d(P;g)\right).
\end{multline*}
Thus  taking the real part of this equality, we conclude that $\la_0>0$.
This also proves  $\la(t)\equiv 0$ does not occur as  $\la_0\ne 0$. This gives another proof of Corollary \ref{nearby-transversality}.
\end{proof}
Now we are ready to prove the equivalence theorem.
\begin{Theorem} Assume that $\{f,g\}$ is a locally tame non-degenerate complete intersection pair as in Theorem \ref{af-regularity}.
Consider the tubular and spherical Milnor fibrations
\[
\begin{split}
&H:\partial E(r,\de)^*\to S_\de^1\\
&\vphi: S_r^{2n-1}\setminus K\to S^1.
\end{split}
\]
These two fibrations are equivalent.
Here we use the notations
\[
\partial E(r,\de)^*:=\{\mathbf z\in B_r^{2n}\,|\, |H(\mathbf z)|=\de\},\,
K=S_r^{2n-1}\cap V(H).
\]
\end{Theorem}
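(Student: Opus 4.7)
The plan is to build a fiber-preserving diffeomorphism $\Phi\colon \partial E(r,\delta)^*\to S_r^{2n-1}\setminus K$ by integrating a vector field on $B_r^{2n}\setminus V(H)$ whose flow preserves $\arg H$ and strictly increases $\|\mathbf{z}\|$. Under the natural identification $S_\delta^1\ni\zeta\mapsto\zeta/\delta\in S^1$, such a $\Phi$ automatically intertwines the two projection maps: $\varphi(\Phi(\mathbf{z}_0))=H(\mathbf{z}_0)/\delta$.

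I would define the vector field
\[
\mathcal{W}(\mathbf{z}):=\mathbf{z}-\frac{\Re\langle \mathbf{z},\mathbf{v}_2(\mathbf{z})\rangle}{\|\mathbf{v}_2(\mathbf{z})\|_{\mathbb{R}}^{2}}\,\mathbf{v}_2(\mathbf{z}),
\]
the real-orthogonal projection of the radial vector onto $\mathbf{v}_2(\mathbf{z})^{\perp_{\mathbb{R}}}$. By construction $\Re\langle\mathcal{W},\mathbf{v}_2\rangle=0$, so by the computation preceding Lemma \ref{positive} the flow of $\mathcal{W}$ preserves $\arg H$. By Cauchy--Schwarz, $\Re\langle\mathcal{W},\mathbf{z}\rangle\geq 0$, with equality iff $\mathbf{z}$ is a real multiple of $\mathbf{v}_2$; but Lemma \ref{positive} rules out such a relation, since $\mathbf{z}=c\mathbf{v}_2$ would force $\lambda=0$ in the decomposition $\mathbf{z}=\lambda\mathbf{v}_1+\mu\mathbf{v}_2$, contradicting $\lambda>0$. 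Hence $\Re\langle\mathcal{W},\mathbf{z}\rangle>0$ throughout, so under the flow $\psi_t$ of $\mathcal{W}$, $\|\psi_t(\mathbf{z})\|$ is strictly increasing, and the trajectory stays in $\{H\neq 0\}$ because $\arg H$ is preserved.

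For $\mathbf{z}_0\in\partial E(r,\delta)^*$ the positivity $\Re\langle\mathcal{W},\mathbf{z}\rangle>0$ and the bound $\|\mathbf{z}\|\leq r$ produce a smallest time $T(\mathbf{z}_0)\geq 0$ with $\|\psi_{T(\mathbf{z}_0)}(\mathbf{z}_0)\|=r$, smooth in $\mathbf{z}_0$ by the implicit function theorem. Setting $\Phi(\mathbf{z}_0):=\psi_{T(\mathbf{z}_0)}(\mathbf{z}_0)$ gives a smooth injective map into $S_r^{2n-1}\setminus K$ with the required intertwining property. Surjectivity is established by running the flow backward from $\mathbf{w}\in S_r^{2n-1}\setminus K$: $\|\psi_{-s}(\mathbf{w})\|$ strictly decreases while $\arg H$ stays constant, so the trajectory is confined to the compact slice $W_\theta:=\{\|\mathbf{z}\|\leq r,\ \arg H=\arg H(\mathbf{w})\}\setminus V(H)$, and one shows it must cross the hypersurface $|H|=\delta$ at a unique point, which is then $\Phi^{-1}(\mathbf{w})$.

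The main obstacle is this last crossing: since $\Re\langle\mathcal{W},\mathbf{v}_1\rangle=\Re\langle\mathbf{z},\mathbf{v}_1\rangle$ has no definite sign on $B_r^{2n}\setminus V(H)$, $|H|$ is not monotone along trajectories, so the flow could in principle linger on a level $|H|=c\neq\delta$ or approach $V(H)$ asymptotically in infinite time. The remedy, analogous to Lemma~10 of \cite{fg-bar}, combines Corollary \ref{nearby-transversality} (transversality of fibers of $H$ to every sphere $S_\rho^{2n-1}$ with $0<\rho\leq r$) with Lemma \ref{positive} applied along the full trajectory to show that the pair $(\|\mathbf{z}\|,|H|)$ gives a proper map on $W_\theta$ into $\mathbb{R}_{>0}^{2}$, so the backward trajectory must meet every level $|H|=c$ before accumulating on $V(H)$, and in particular meets $|H|=\delta$. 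Once surjectivity is in place, reversibility of the flow makes $\Phi$ a diffeomorphism, proving the equivalence.
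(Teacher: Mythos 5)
Your choice of vector field is where the argument breaks down, and the difficulty you flag at the end is a genuine gap rather than a loose end that properness cleans up.

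The field $\mathcal{W}(\mathbf z)=\mathbf z-\frac{\Re\langle\mathbf z,\mathbf v_2\rangle}{\|\mathbf v_2\|^2}\mathbf v_2$ gives $\Re\langle\mathcal W,\mathbf v_2\rangle=0$ and (via Lemma \ref{positive}) $\Re\langle\mathcal W,\mathbf z\rangle>0$, but you have no control over $\Re\langle\mathcal W,\mathbf v_1\rangle=\Re\langle\mathbf z,\mathbf v_1\rangle$, hence no control over $|H|$. This is fatal in two places, not just surjectivity. First, for the forward flow starting on $\partial E(r,\delta)^*$: although $\|\mathbf z(t)\|$ increases, $|H(\mathbf z(t))|$ can dip and the trajectory can approach $V(H)$, where $\mathbf v_1,\mathbf v_2$ blow up; the quantity $\Re\langle\mathcal W,\mathbf z\rangle=\|\mathbf z\|^2\sin^2\theta$ (with $\theta$ the real angle between $\mathbf z$ and $\mathbf v_2$) is positive pointwise but has no uniform lower bound, so you cannot conclude that $\|\mathbf z(t)\|$ reaches $r$ in finite time without trapping the flow in a compact subset of $\{H\neq0\}$. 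Second, for the backward flow: you want every $\mathbf w\in S_r^{2n-1}\setminus K$ to flow back to $\{|H|=\delta\}$, but a point with $|H(\mathbf w)|<\delta$ need never have $|H|$ rise to $\delta$ along a trajectory on which $|H|$ is not monotone, and the properness of $(\|\mathbf z\|,|H|)$ on the slice $W_\theta$ does not force the trajectory to meet a prescribed $|H|$-level. (Note also that the paper does not attempt to hit all of $S_r^{2n-1}\setminus K$ directly: it first replaces $S_r^{2n-1}\setminus K$ by $S_r^{2n-1}\setminus N(K)$ using transversality, and only then builds the flow.)

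The paper resolves both problems by constructing a vector field $\mathcal X$ satisfying \emph{all three} conditions $\Re\langle\mathcal X,\mathbf v_2\rangle=0$, $\Re\langle\mathcal X,\mathbf z\rangle>0$, and $\Re\langle\mathcal X,\mathbf v_1\rangle>0$ on the compact set $B_r^{2n}\cap\{|H|\geq\delta\}$. The extra condition makes $|H|$ strictly increasing, which confines the flow to that compact set and makes the hitting times finite and unique. This $\mathcal X$ cannot be written down by a single closed formula; it is built by a partition of unity from local solutions, and Lemma \ref{positive} enters precisely at the patches where $\mathbf z,\mathbf v_1,\mathbf v_2$ are linearly dependent, since there the relation $\mathbf z=\lambda\mathbf v_1+\mu\mathbf v_2$ with $\lambda>0$ lets one take $\mathcal X=\mathbf v_1$ and still have $\Re\langle\mathcal X,\mathbf z\rangle>0$. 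Your proposal uses Lemma \ref{positive} only in the weaker form ``$\mathbf z$ is not a real multiple of $\mathbf v_2$,'' which is not enough. To repair the argument you would either need to adopt the paper's patching construction, or else prove a quantitative version of Lemma \ref{positive} giving a uniform lower bound on $\Re\langle\mathcal W,\mathbf z\rangle$ on $B_r^{2n}\cap\{|H|\geq\delta\}$, together with the reduction from $S_r^{2n-1}\setminus K$ to $S_r^{2n-1}\setminus N(K)$; as written, neither is supplied.
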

\begin{proof}Let $\de$ be sufficiently small and put 
$\partial N(K):=\{\mathbf z\in S_r^{2n-1}\,|\,|H(\mathbf z)|<\de\}$. By the transversality, $N(K)$ is contractible to $K$ and $N(K)\setminus K$ is diffeomorphic to $\partial N(K)\times (0,1)$ 
and $\vphi:S_r^{2n-1}\setminus N(K)\to S^1$ is equivalent to the spherical fibration $\vphi:S_r^{2n-1}\setminus K\to S^1$.
Note that  vectors $\mathbf v_1,\mathbf v_2$ are real orthogonal.
Take a locally finite open covering $\mathcal U=\{U_\al,\,\al\in A\}\cup\{V_\be,\,\be\in B\}$ of $B_r^{2n}\cap\{\mathbf z\,|\, |H(\mathbf z)|\ge \de\}$
as follows. Each $U_\al, V_\be$ are open disk  with center $p_\al, p_\be$. Secondly 
 in each $U_\al$, $\{\mathbf z,\mathbf v_1(\mathbf z), \mathbf v_2(\mathbf z)\}$  are linearly independent over $\mathbb R$, while in $V_\be$,
 $p_\be$ can be written as
 \[
 p_\be=\la \mathbf v_1(p_\be)+\mu \mathbf v_2(p_\be),\quad \la>0
 \]
 and we take the radius of $V_\be$  is small enough so that 
 $\Re(\mathbf z,\mathbf v_1(\mathbf z))>0$ for any $\mathbf z\in V_\be$. (There might exist a point $\mathbf z\in V_\be$ where $\mathbf z, \mathbf v_1(\mathbf z),\mathbf v_2(\mathbf z)$ are linearly independent.)
Construct a vector field  $\mathbf w_\al$ on $U_\al$  
so that
\[\begin{split}
&\Re(\mathbf w_\al(\mathbf z),\mathbf v_2(\mathbf z))=0,\Re(\mathbf w_\al(\mathbf z),\mathbf v_1(\mathbf z))=1,\,
\Re(\mathbf w_\al(\mathbf z),\mathbf z)=1,\,\,\forall\mathbf z\in U_\al.
\end{split}
\]
On $V_\be$, we simply take $\mathbf w_\be(\mathbf z)=\mathbf v_1(\mathbf z)$. As $V_\be$ is small enough,   $\Re(\mathbf w_\be(\mathbf z),\mathbf z)>0$
and $\Re(\mathbf w_\be(\mathbf z),\mathbf v_2(\mathbf z))=0$
for $\forall \mathbf z\in V_\be$.
Then glue together these vectors using a partition of unity to get a vector field $\mathcal X(\mathbf z)$ on $B_r^{2n}\cap\{\mathbf z\,|\, |H(\mathbf z)|\ge \de\}$.
Note that for any $\mathbf z\in B_r^{2n}\cap\{\mathbf z\,|\, |H(\mathbf z)|\ge \de\}$, $\Re(\mathcal X(\mathbf z),\mathbf v_2(\mathbf z))=0$ and 
$\Re(\mathcal X(\mathbf z),\mathbf z)>0$  and $\Re(\mathcal X(\mathbf z),\mathbf v_1(\mathbf z))>0$ by the construction.
Such a vector field is called a Milnor vector field in \cite{ART1,ART2}. Along any integration curve $\mathbf z(t)$ starting a point
$p\in \partial E(r,\de)^*$, $\arg H(\mathbf z(t))$ is constant  and $|H(\mathbf z(t))|,\,\|\mathbf z(t)\|$ are strictly increasing. This curve  arrives at
$\mathbf z(s(p))\in S_r^{2n-1}\setminus N(K)$ for a finite time $s(p)>0$.
Using this integration and the correspondence $p\mapsto \mathbf z(s(p))$, we can construct a diffeomorphism  
$\psi: \partial E(r,\de)^*\to S_r^{2n-1}-N(K)$      
and $\psi$ gives the commutative diagram:
\[
\begin{matrix}
\partial E(r,\de)^*&\mapright{\psi}& S_r^{2n-1}-N(K)\\
\mapdown{H}&&\mapdown {\vphi}\\
S_\de^1&\mapright{\iota}&S^1
\end{matrix}
\]
where $\iota(\eta)=\eta/\de$. Thus $\psi$ gives an isomorphism of the two fibrations.
\end{proof}

\def\cprime{$'$} \def\cprime{$'$} \def\cprime{$'$} \def\cprime{$'$}
  \def\cprime{$'$} \def\cprime{$'$} \def\cprime{$'$} \def\cprime{$'$}

\end{document}